\newcommand{\be}{\begin{eqnarray}}
\newcommand{\ee}{\end{eqnarray}}
\newcommand{\R}{{\mathbb R}}
\newcommand{\B}{{\mathcal B}}
\newcommand{\E}{{\bf E\,}}
\newcommand{\D}{{\mathcal D}}
\newcommand{\LLL}{\mathcal{L}}
\newtheorem{theorem}{Theorem}
\newtheorem{lemma}[theorem]{Lemma}
\newtheorem{cor}[theorem]{Corollary}
\theoremstyle{definition}
\newtheorem{defi}[theorem]{Definition}
\theoremstyle{remark}
\newtheorem{remark}[theorem]{Remark}
\numberwithin{equation}{section}\input epsf.sty
\begin{document}\thispagestyle{empty}

\title[Orthogonal martingales]{On Burkholder function for orthogonal  martingales  and zeros of Legendre polynomials}
\author[A.Borichev, P.Janakiraman, A.Volberg]{Alexander Borichev, Prabhu Janakiraman, Alexander Volberg}
\address{Alexander Borichev, 
\newline
Universit\'e de Provence, Marseille,
\newline {\tt borichev@cmi.univ-mrs.fr},
\newline \phantom{x}\,\, Prabhu Janakiraman, 
\newline Department of Mathematics, Michigan State University,
\newline {\tt pjanakir1978@gmail.com},
\newline \phantom{x}\,\, Alexander Volberg, 
\newline Department of Mathematics, Michigan State University,
\newline {\tt volberg@math.msu.edu}}

\thanks{The research of the first author was partially supported by the ANR grants DYNOP and FRAB; 
the research of the second and the third authors was partially supported by the NSF grants DMS-0501067 and DMS-0605166.
}

\begin{abstract}
Burkholder obtained a sharp estimate of $\E|W|^p$ via $\E|Z|^p$, for martingales $W$  differentially subordinated to martingales $Z$. His result is that 
$\E|W|^p\le (p^*-1)^p\E|Z|^p$, where $p^* =\max (p, \frac{p}{p-1})$. What happens if the martingales have 
an extra property of being orthogonal martingales? This property is an analog (for martingales) of 
the Cauchy-Riemann equation for functions, and it naturally appears in a problem on singular integrals
(see the references at the end of Section~1). 
We establish here that in this case the constant is quite different. Actually, $\E|W|^p\le (\frac{1+z_p}{1-z_p})^p\E|Z|^p$, 
$p\ge 2$, where $z_p$ is a specific zero of a certain solution of the Legendre ODE. We also prove the sharpness 
of this estimate. Asymptotically, $(1+z_p)/(1-z_p)=(4j^{-2}_0+o(1))p$, 
$p\to\infty$, where $j_0$ is the first positive zero of the Bessel function of zero order.
This connection with zeros of special functions (and orthogonal polynomials for $p=n(n+1)$) is rather unexpected.
\end{abstract}
\maketitle

\section{Introduction}

Let $Z=(X,Y)$, $W=(U,V)$ be two $\R^2$--valued martingales on the filtration of the $2$--dimensional Brownian motion 
$B_s= (B_{1s}, B_{2s})^T$. Let 
$$
A=\begin{bmatrix} -1, & i\\i, & 1\end{bmatrix}.
$$ 
We want $W$ to be a martingale transform of $Z$ defined by $A$. Let
\begin{align*}
X(t)&= \int_0^t \overrightarrow{x}(s)\cdot dB_s\,,\\
Y(t)&= \int_0^t\overrightarrow{y}(s)\cdot dB_s\,,
\end{align*}
where $X, Y$ are {\it real-valued} processes, and $\overrightarrow{x}(s), \overrightarrow{y}(s)$ are $\R^2$--valued ``martingale differences" written as row vectors. 
 
Put
\begin{equation*}
Z(t) = X(t) +iY(t),\quad Z(t) = \int_0^t (\overrightarrow{x}(s) +i \overrightarrow{y}(s))\cdot dB_s\,,
\end{equation*}
and
\begin{equation*}
W(t) = U(t) +iV(t),\quad W(t) = \int_0^t (A(\overrightarrow{x}(s)^T +i \overrightarrow{y}(s)^T))^T\cdot dB_s\,.
\end{equation*}
We denote
$$
W=A\star Z\,.
$$
As above, 
\begin{gather*}
U(t) = \int_0^t \overrightarrow{u}(s)\cdot dB_s\,,\\
V(t) = \int_0^t\overrightarrow{v}(s)\cdot dB_s\,,\\
W(t) = \int_0^t (\overrightarrow{u}(s) +i \overrightarrow{v}(s))\cdot dB_s\,.
\end{gather*}
 
We can easily write the components of $\overrightarrow{u}(s), \overrightarrow{v}(s)$:
\begin{gather*}
u_1(s) =-x_1(s)-y_2(s),\quad v_1(s) = x_2(s)-y_1(s)\,,\\
u_2(s) =x_2(s)-y_1(s),\quad v_2(s) =x_1(s)+y_2(s)\,.
\end{gather*}

Note that
\begin{equation*}
\overrightarrow{u}\cdot \overrightarrow{v} = u_1v_1+u_2v_2 = -(x_1+y_2)(x_2-y_1) + (x_2-y_1)(x_1+y_2)=0\,.
\end{equation*}

\subsection{Local orthogonality.}
\label{localort}

The processes
\begin{gather*}
\langle X, U\rangle(t) :=\int_0^t \overrightarrow{x}\cdot \overrightarrow{u}ds,\quad
\langle X, V\rangle(t) :=\int_0^t \overrightarrow{x}\cdot \overrightarrow{v}ds,\\
\langle Y, U\rangle(t) :=\int_0^t \overrightarrow{y}\cdot \overrightarrow{u}ds,\quad 
\langle Y, V\rangle(t) :=\int_0^t \overrightarrow{y}\cdot \overrightarrow{v}ds,\\
\langle X, X\rangle(t) :=\int_0^t \overrightarrow{x}\cdot \overrightarrow{x}ds,\quad 
\langle Y, Y\rangle(t) :=\int_0^t \overrightarrow{y}\cdot \overrightarrow{y}ds,\\
\langle X, Y\rangle(t) :=\int_0^t \overrightarrow{x}\cdot \overrightarrow{y}ds,\quad 
\langle U, U\rangle(t) :=\int_0^t \overrightarrow{u}\cdot \overrightarrow{u}ds,\\
\langle V, V\rangle(t) :=\int_0^t \overrightarrow{v}\cdot \overrightarrow{v}ds,\quad
\langle U, V\rangle(t) :=\int_0^t \overrightarrow{u}\cdot \overrightarrow{v}ds.
\end{gather*}
are called the covariance processes. We can denote
\begin{gather*}
d\langle X, U\rangle(t) := \overrightarrow{x}(t)\cdot \overrightarrow{u}(t),\quad
d\langle X, V\rangle(t) := \overrightarrow{x}(t)\cdot \overrightarrow{v}(t),\\
d\langle Y, U\rangle(t) :=\overrightarrow{y}(t)\cdot \overrightarrow{u}(t),\quad
d\langle Y, V\rangle(t) :=\overrightarrow{y}(t)\cdot \overrightarrow{v}(t),\\
d\langle X, X\rangle(t) :=\overrightarrow{x}(t)\cdot \overrightarrow{x}(t),\quad
d\langle Y, Y\rangle(t) := \overrightarrow{y}(t)\cdot \overrightarrow{y}(t),\\
d\langle X, Y\rangle(t) := \overrightarrow{x}(t)\cdot \overrightarrow{y}(t),\quad
d\langle U, U\rangle(t) :=\overrightarrow{u}(t)\cdot \overrightarrow{u}(t),\\
d\langle V, V\rangle(t) :=\overrightarrow{v}(t)\cdot \overrightarrow{v}(t),\quad
d\langle U, V\rangle(t) :=\overrightarrow{u}(t)\cdot \overrightarrow{v}(t),\\
d\langle Z, Z\rangle(t) := \overrightarrow{x}(t)\cdot \overrightarrow{x}(t)+\overrightarrow{y}(t)\cdot \overrightarrow{y}(t)\,,\\
d\langle W, W\rangle(t) := \overrightarrow{u}(t)\cdot \overrightarrow{u}(t)+\overrightarrow{v}(t)\cdot \overrightarrow{v}(t)\,.
\end{gather*}

The following observations are important. 

\begin{lemma}
\label{locort}
Let $W=U+iV$ be the martingale transform of $Z=X+iY$ by means of the matrix $A$ above.
Then
\begin{equation}
\label{ortheq1}
d\langle U, V\rangle (t) =0\,,\quad d\langle U, U\rangle (t) =d\langle V, V\rangle (t)\,.
\end{equation}
Equivalently,
$$
\overrightarrow{u}(t)\cdot \overrightarrow{v}(t)=0,\quad |\overrightarrow{u}(t)|= |\overrightarrow{v}(t)|\,.
$$ 
\end{lemma}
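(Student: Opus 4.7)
The lemma is a deterministic, purely algebraic statement about the $2\times 2$ matrix $A$ applied to the vector of martingale differentials; once the components of $\overrightarrow{u}$ and $\overrightarrow{v}$ have been read off from $W=A\star Z$, both identities reduce to bilinear identities in $x_1,x_2,y_1,y_2$. So the plan is not to invoke any stochastic machinery at all, but to verify the conclusion pointwise in $s$ using the explicit formulas for $u_i,v_i$ displayed just before the lemma.

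For the first identity $d\langle U,V\rangle(t)=0$: by definition of the covariance process, $d\langle U,V\rangle(t)=\overrightarrow{u}(t)\cdot\overrightarrow{v}(t)$, and the computation
$\overrightarrow{u}\cdot\overrightarrow{v}=-(x_1+y_2)(x_2-y_1)+(x_2-y_1)(x_1+y_2)=0$
is precisely the one already carried out in the excerpt immediately after the coordinates of $\overrightarrow{u},\overrightarrow{v}$ are listed. So this half of the lemma is essentially already recorded; I only need to rephrase it as a statement about $d\langle U,V\rangle(t)$.

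For the second identity $d\langle U,U\rangle(t)=d\langle V,V\rangle(t)$, equivalently $|\overrightarrow{u}(t)|=|\overrightarrow{v}(t)|$, I would simply expand
\begin{equation*}
|\overrightarrow{u}|^2=u_1^2+u_2^2=(x_1+y_2)^2+(x_2-y_1)^2=v_2^2+v_1^2=|\overrightarrow{v}|^2,
\end{equation*}
where the middle equality uses $u_1=-(x_1+y_2)=-v_2$ and $u_2=x_2-y_1=v_1$. In fact these same relations reveal the stronger structural fact that $\overrightarrow{v}$ is obtained from $\overrightarrow{u}$ by a rotation by $\pi/2$ (up to sign), which at once gives both orthogonality and equality of lengths.

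There is no genuine technical obstacle; the ``hard'' part is conceptual rather than computational. The specific matrix $A$ was chosen precisely so that \eqref{ortheq1} would hold, and the two resulting identities are the pointwise analog of the Cauchy--Riemann equations for $W$ viewed as an ``analytic function'' of $Z$. This is the property that will later be exploited to produce a Burkholder-type constant strictly smaller than $p^*-1$ for orthogonal martingales.
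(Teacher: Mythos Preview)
Your proposal is correct and matches the paper's approach exactly: the orthogonality computation $\overrightarrow{u}\cdot\overrightarrow{v}=0$ is precisely the display appearing just before the lemma, and the paper leaves the equality $|\overrightarrow{u}|=|\overrightarrow{v}|$ implicit from the same formulas, which you have spelled out via $u_1=-v_2$, $u_2=v_1$.
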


\begin{lemma}
\label{subord4}
We have
$$
d\langle U, U\rangle (t) \le 2\,d\langle Z, Z\rangle (t)\,,\qquad d\langle V, V\rangle (t) \le 2\,d\langle Z, Z\rangle (t)\,.
$$
Equivalently, 
\begin{gather*}
\overrightarrow{u}(t)\cdot \overrightarrow{u}(t)\le 2\,(\overrightarrow{x}(t)\cdot \overrightarrow{x}(t) + \overrightarrow{y}(t)\cdot \overrightarrow{y}(t))\,,\\
\overrightarrow{v}(t)\cdot \overrightarrow{v}(t)\le 2\,(\overrightarrow{x}(t)\cdot \overrightarrow{x}(t) + \overrightarrow{y}(t)\cdot \overrightarrow{y}(t))\,.
\end{gather*}
or, equivalently,
\begin{equation*}
d\langle W, W\rangle (t) \le 4\,d\langle Z, Z\rangle (t)\,.
\end{equation*}
\end{lemma}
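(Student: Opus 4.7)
The plan is to read off the explicit expressions for $\overrightarrow{u}(s)$ and $\overrightarrow{v}(s)$ in terms of $\overrightarrow{x}(s)$ and $\overrightarrow{y}(s)$ given immediately above Lemma~\ref{locort}, square them, and do a bit of bookkeeping. From $u_1=-x_1-y_2$ and $u_2=x_2-y_1$ one finds
\[
\overrightarrow{u}\cdot\overrightarrow{u} = (x_1+y_2)^2+(x_2-y_1)^2 = |\overrightarrow{x}|^2+|\overrightarrow{y}|^2 + 2(x_1y_2-x_2y_1),
\]
and the identical calculation with $v_1=x_2-y_1$, $v_2=x_1+y_2$ gives the same value for $\overrightarrow{v}\cdot\overrightarrow{v}$ (which is consistent with the equality $d\langle U,U\rangle=d\langle V,V\rangle$ already established in Lemma~\ref{locort}).

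The desired inequality $\overrightarrow{u}\cdot\overrightarrow{u}\le 2(|\overrightarrow{x}|^2+|\overrightarrow{y}|^2)$ is now equivalent to $2(x_1y_2-x_2y_1)\le |\overrightarrow{x}|^2+|\overrightarrow{y}|^2$. This follows from Cauchy--Schwarz applied to the two--dimensional vectors $(x_1,x_2)$ and $(y_2,-y_1)$, followed by AM--GM:
\[
|x_1y_2-x_2y_1|\le |\overrightarrow{x}|\,|\overrightarrow{y}|\le \tfrac12\bigl(|\overrightarrow{x}|^2+|\overrightarrow{y}|^2\bigr).
\]
Since the formula for $\overrightarrow{v}\cdot\overrightarrow{v}$ is the same, the bound for $d\langle V,V\rangle$ is identical, and summing the two yields $d\langle W,W\rangle(t)\le 4\,d\langle Z,Z\rangle(t)$.

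There is really no genuine obstacle: the lemma is a pointwise algebraic identity for the (row vectors of) martingale differences, with no stochastic input needed beyond the definitions. The only conceptual point worth flagging is that the Cauchy--Schwarz step is an equality exactly when $\overrightarrow{x}\perp\overrightarrow{y}$, and the AM--GM step is an equality exactly when $|\overrightarrow{x}|=|\overrightarrow{y}|$; together these are the local conformality condition on $Z$. Thus the constant $2$ is forced by the structure of the matrix $A$ and cannot be improved, which is what makes the orthogonal setting genuinely different from ordinary $1$-subordination.
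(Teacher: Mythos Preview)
Your proof is correct and follows essentially the same elementary route as the paper; the paper is even more direct, applying $(a\pm b)^2\le 2(a^2+b^2)$ to each of the two summands $(x_1+y_2)^2$ and $(x_2-y_1)^2$ rather than expanding the cross term and invoking Cauchy--Schwarz plus AM--GM. Your remark on the equality cases (local conformality of $Z$) is a nice addition not present in the original.
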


\begin{proof} We have
$$
\overrightarrow{u}(t)\cdot \overrightarrow{u}(t)= (x_1 + y_2)^2 + (x_2-y_1)^2
\le 2\,(x_1^2 + y_2^2 + x_2^2 + y_1^2)= 2\,d\langle Z, Z\rangle\,.
$$
Similar calculations can be made for $v$.
\end{proof}

\begin{defi} The complex martingale $W=A\star Z$ will be called the Ahlfors--Beurling transform of the martingale $Z$.
\end{defi}

Now let us quote a theorem of Banuelos--Janakiraman \cite{BaJ1}:

\begin{theorem}
\label{BaJthMart}
Let $Z,W$ be two martingales on the filtration of the $2$--dimen\-sional Brownian motion, and let $W$ be an orthogonal martingale 
in the sense of \eqref{ortheq1}: $d\langle U, V\rangle(t) =0$, $d\langle U, U\rangle (t) =d\langle V, V\rangle (t)$. 
Suppose that $Z$ and $W$ satisfy the subordination property
\begin{equation*}
d\langle W, W\rangle \le d\langle Z,Z\rangle.
\end{equation*}
Let $p\ge 2$. Then for every $t$,
\begin{equation*}
(\E |W(t)|^p)^{1/p} \le \sqrt{\frac{p^2-p}{2}}(\E |Z(t)|^p)^{1/p}
\end{equation*}
{\rm(}$|\cdot|$ denotes the euclidean norm in $\R^2${\rm)}.
\end{theorem}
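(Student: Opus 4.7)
\emph{Proof plan.} The strategy is to run It\^o's formula on $|W(t)|^p$, exploiting the orthogonality of $W$ to replace the generic second-order It\^o correction by the full Laplacian on $\R^2$. By Lemma~\ref{locort}, $d\langle U,V\rangle=0$ and $d\langle U,U\rangle=d\langle V,V\rangle=\tfrac12 d\langle W,W\rangle$, so for any $f\in C^2(\R^2)$ It\^o's formula collapses to
\[
df(W(t))=(\text{local martingale})+\tfrac14\Delta f(W(t))\,d\langle W,W\rangle(t).
\]
A direct calculation gives $\Delta|w|^p=p^2|w|^{p-2}$, and hence, taking $f(w)=|w|^p$ and integrating (with a routine stopping argument to ensure integrability),
\begin{equation*}
\E|W(t)|^p=\frac{p^2}{4}\,\E\int_0^t |W(s)|^{p-2}\,d\langle W,W\rangle(s).
\end{equation*}
The coefficient $p^2$, in place of the $p(p-1)$ one would get without orthogonality, is the martingale analog of the harmonic identity $\Delta|f|^p=p^2|f|^{p-2}|f'|^2$ for holomorphic $f$; it is the entire source of the improvement over Burkholder's universal constant $p^*-1$.

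Next, by the subordination $d\langle W,W\rangle\le d\langle Z,Z\rangle$ the identity upgrades to
\[
\E|W(t)|^p\le\tfrac{p^2}{4}\,\E\int_0^t|W|^{p-2}\,d\langle Z,Z\rangle.
\]
Applying It\^o to $|z|^p$ (no orthogonality) and dropping the non-negative off-diagonal contribution $p(p-2)|Z|^{p-4}|X\overrightarrow{x}+Y\overrightarrow{y}|^2$ in the second-order correction gives the companion inequality
\[
\E|Z(t)|^p\ge\tfrac{p}{2}\,\E\int_0^t|Z|^{p-2}\,d\langle Z,Z\rangle.
\]
To close the argument, I would apply H\"older's inequality with conjugate exponents $\tfrac{p}{p-2}$ and $\tfrac{p}{2}$ to the measure $dP\otimes d\langle Z,Z\rangle$ on the integral $\int|W|^{p-2}d\langle Z,Z\rangle$, handle the resulting $\E\langle Z,Z\rangle_t$ factor via $\E|Z(t)|^2\le(\E|Z(t)|^p)^{2/p}$, and treat the residual $|W|^p$--factor by a further It\^o expansion (for instance of $|W(t)|^p\langle Z,Z\rangle_t$ via integration by parts, or of $|W|^{p+2}$) combined once more with subordination. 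Solving the resulting self-referential inequality in $(\E|W|^p)^{1/p}$ should yield the claimed factor $\sqrt{(p^2-p)/2}$, whose arithmetic comes from the combination $\tfrac{p^2}{4}\cdot\tfrac{2}{p}=\tfrac{p}{2}$ together with an AM--GM step.

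The main obstacle is precisely this closure step: the naive separable Bellman ansatz $B(z,w)=|w|^p-C^p|z|^p$ fails because $|W|$ cannot be bounded pointwise by $|Z|$. A cleaner route is probably to construct a joint super-solution of the form $B(z,w)=\alpha(|z|^2+\beta|w|^2)^{p/2}-|w|^p$ and verify by direct Hessian computation that $B(Z(t),W(t))$ is a submartingale under orthogonality plus subordination; optimizing over $\alpha,\beta$, and using $p\ge2$ (which is exactly what makes the $p(p-2)$ cross-term in the Hessian non-negative), should then produce the sharp constant $\sqrt{(p^2-p)/2}$. Making this optimization rigorous, and checking that nothing is lost from a suboptimal Jensen step at the very end, is the essential technical point.
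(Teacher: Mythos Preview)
This theorem is not proved in the present paper; it is simply \emph{quoted} from Ba\~nuelos--Janakiraman~\cite{BaJ1}, so there is no argument here to compare yours against. That said, the opening of your proposal is exactly right and is indeed where orthogonality does its work: for a conformal martingale $W$ the It\^o correction collapses to $\tfrac14\Delta_w f\,d\langle W,W\rangle$, and since $\Delta|w|^p=p^2|w|^{p-2}$ one gets (for $W(0)=0$)
\[
\E|W(t)|^p=\frac{p^2}{4}\,\E\int_0^t |W|^{p-2}\,d\langle W,W\rangle
\le \frac{p^2}{4}\,\E\int_0^t |W|^{p-2}\,d\langle Z,Z\rangle,
\]
and your companion bound $\E|Z(t)|^p\ge \tfrac{p}{2}\,\E\int_0^t|Z|^{p-2}d\langle Z,Z\rangle$ for $p\ge 2$ is also correct.

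The gap is entirely in the closure, and it is a genuine one. Your first route (H\"older with exponents $\tfrac{p}{p-2},\tfrac{p}{2}$ against $dP\otimes d\langle Z,Z\rangle$) replaces $\E\int|W|^{p-2}d\langle Z,Z\rangle$ by a product whose first factor is $\bigl(\E\int|W|^{p}d\langle Z,Z\rangle\bigr)^{(p-2)/p}$; this is of \emph{higher} order in $|W|$ than what you started from, so the integration-by-parts you sketch does not produce a closed inequality in $\E|W|^p$ --- each iteration pushes the exponent up and the constants compound. Your second route, the Bellman candidate $B(z,w)=\alpha(|z|^2+\beta|w|^2)^{p/2}-|w|^p$, does not separate $|W|$ from the conclusion: even if $B(Z_t,W_t)$ were a submartingale you would only get $\E|W|^p\le \alpha\,\E(|Z|^2+\beta|W|^2)^{p/2}$, and any convexity step that removes the $|W|$ on the right costs a power of $2$ (or worse) that kills the constant $\sqrt{(p^2-p)/2}$. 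More fundamentally, the only hypotheses are orthogonality of $W$ and $d\langle W,W\rangle\le d\langle Z,Z\rangle$; the cross-variations $d\langle X,U\rangle$, $d\langle Y,V\rangle$, etc.\ are unconstrained, so the drift of any joint function $B(z,w)$ must be sign-definite for \emph{all} admissible cross-terms --- a condition your ansatz does not address.

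For the actual argument you should consult~\cite{BaJ1} directly; the identity you derived is the heart of the matter there as well, but the passage from $\E\int|W|^{p-2}d\langle Z,Z\rangle$ to $C\,\E|Z|^p$ is carried out by a different mechanism than either of the two you propose.
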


One can easily obtain a ``dual" version for $1<p\le 2$:

\begin{theorem}
\label{JVVthMart}
Let $Z,W$ be two martingales on the filtration of the $2$--dimen\-sional Brownian motion, and let $Z$ be an orthogonal martingale 
in the sense of \eqref{ortheq1}: $d\langle X, Y\rangle(t) =0$, $d\langle X, X\rangle (t) =d\langle Y, Y\rangle (t)$. 
Suppose that $Z$ and $W$ satisfy the subordination property
\begin{equation*}
d\langle W, W\rangle \le d\langle Z,Z\rangle.
\end{equation*}
Let $1<p\le 2$. Then for every $t$,
\begin{equation*}
(\E |W(t)|^p)^{1/p} \le \sqrt{\frac{2}{p^2-p}}(\E |Z(t)|^p)^{1/p}\,.
\end{equation*}
\end{theorem}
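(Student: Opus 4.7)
The plan is to derive Theorem~\ref{JVVthMart} from Theorem~\ref{BaJthMart} by an $L^p$--$L^q$ duality argument, with $q:=p/(p-1)\ge 2$ the conjugate exponent.

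First, I would use the duality representation
\[
\|W(t)\|_p \;=\; \sup\bigl\{|\E\operatorname{Re}(W(t)\overline{\Phi})|\,:\,\Phi\in L^q(\mathcal{F}_t),\ \|\Phi\|_q\le 1\bigr\},
\]
and for each such $\Phi$ introduce the $L^q$-martingale $N(s):=\E[\Phi\mid\mathcal{F}_s]$, so that $\|N(t)\|_q\le 1$. The task is reduced to bounding $|\E[W(t)\overline{N(t)}]|$ by $\sqrt{2/(p^2-p)}\,\|Z(t)\|_p$ uniformly in such $N$.

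Second, I would exploit that $Z$ is orthogonal in order to represent it as a conformal (holomorphic) martingale with respect to the complex Brownian motion $\mathfrak{B}:=B_1+iB_2$, namely $dZ=h\,d\mathfrak{B}$ for a predictable complex-valued $h$. The subordination $d\langle W,W\rangle\le d\langle Z,Z\rangle$ then yields the representation $dW=f\,dZ+g\,d\bar{Z}$ with $|f|^2+|g|^2\le 1$, while $N$ decomposes into its holomorphic and anti-holomorphic parts as $dN=a\,d\mathfrak{B}+b\,d\bar{\mathfrak{B}}$. Using $d\mathfrak{B}\,d\bar{\mathfrak{B}}=2\,dt$, $d\mathfrak{B}\,d\mathfrak{B}=0$, a short It\^o computation yields
\[
\E[W(t)\overline{N(t)}] \;=\; 2\int_0^t\E\bigl[fh\,\bar{a}+g\bar{h}\,\bar{b}\bigr]\,ds.
\]
I would then reorganize the right--hand side as a pairing $\E[Z(t)\overline{\widetilde{N}(t)}]$, where $\widetilde{N}$ is an auxiliary complex martingale built from $N$ by an Ahlfors--Beurling-type transformation with weights tuned by $(f,g)$; the constraint $|f|^2+|g|^2\le 1$ makes $\widetilde{N}$ both orthogonal and subordinate to $N$.

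Finally, applying Theorem~\ref{BaJthMart} to the pair $(N,\widetilde{N})$ at the exponent $q\ge 2$ gives the required control of $\|\widetilde{N}(t)\|_q$ by $\|N(t)\|_q\le 1$, and H\"older's inequality then delivers $|\E[Z(t)\overline{\widetilde{N}(t)}]|\le\|Z(t)\|_p\|\widetilde{N}(t)\|_q$. Taking the supremum over $\Phi$ would yield the stated estimate.

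I expect the main obstacle to lie in the construction of $\widetilde{N}$: one must calibrate it so that the sharp constant $\sqrt{2/(p^2-p)}$ emerges, rather than the weaker $\sqrt{(q^2-q)/2}$ produced by a literal dualization (note these two expressions coincide only at $p=2$). Achieving this sharp value uses essentially that $Z$ has no anti-holomorphic part --- its conformality restricts the channels through which $(a,b)$ couple to $(f,g)$ --- together with the saturation of $|f|^2+|g|^2\le 1$ in the extremal direction. Once this calibration is in place, the reduction to Theorem~\ref{BaJthMart} becomes routine.
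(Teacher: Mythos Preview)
Your duality set-up is correct, and you have correctly computed that the pairing $\E[W(t)\overline{N(t)}]$ can be rewritten as $\E[Z(t)\overline{\widetilde N(t)}]$ with $\widetilde N$ orthogonal and differentially subordinate to $N$. But this is exactly where the argument stalls: applying Theorem~\ref{BaJthMart} to $(\widetilde N,N)$ at the dual exponent $q=p/(p-1)$ yields $\|\widetilde N\|_q\le\sqrt{(q^2-q)/2}\,\|N\|_q$, and hence only $\|W\|_p\le\sqrt{(q^2-q)/2}\,\|Z\|_p$. Since $(q^2-q)/2=p/(2(p-1)^2)$ while $2/(p^2-p)=2/(p(p-1))$, their ratio is $p^2/(4(p-1))=(p-2)^2/(4(p-1))+1>1$ for $1<p<2$, so the duality constant is \emph{strictly} worse than the one stated. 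Your ``calibration'' paragraph does not close this gap: once you pass to $\widetilde N$, the only information retained is that $\widetilde N$ is orthogonal and subordinate to $N$, and there is no mechanism in the argument for exploiting anything finer. Varying $(f,g,h)$ over all admissible choices lets $\widetilde N$ range over essentially all orthogonal martingales subordinate to $N$, so the best bound available on $\sup_{\|N\|_q\le1}\|\widetilde N\|_q$ is precisely the Ba\~nuelos--Janakiraman constant at $q$.

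The word ``dual'' in the paper does not refer to $L^p$--$L^{p'}$ duality; it refers to rerunning the Ba\~nuelos--Janakiraman It\^o/Bellman argument of \cite{BaJ1} with the roles of the two martingales interchanged. In that proof (for $p\ge2$, $W$ orthogonal) the orthogonality of $W$ converts the Hessian of a radial function of $w$ into its Laplacian, while the general $Z$ contributes the full Hessian in $z$; the resulting differential inequality, combined with H\"older, produces the constant $\sqrt{p(p-1)/2}$. For $1<p\le 2$ with $Z$ orthogonal the same mechanism applies with $z$ and $w$ swapped: now the Laplacian appears in the $z$-variable and the full Hessian in $w$, and because the eigenvalues of $\mathrm{Hess}\,|w|^p$ reverse their ordering when $p$ crosses $2$, the identical computation delivers the reciprocal constant $\sqrt{2/(p(p-1))}$. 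That is the ``easy dual version'' the authors have in mind; it is a direct argument, not a functional-analytic duality.
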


We use the notations
$$
\|Z(t)\|_p:= (\E |Z(t)|^p)^{1/p}\,.
$$
Sometimes we omit $t$ and just write $\|Z\|_p$.

Theorem \ref{BaJthMart} together with Lemmas \ref{locort}, \ref{subord4}
gives the following statement.

\begin{theorem}
\label{FG1}
$\|W\|_p =\|A\star Z\|_p \le \sqrt{2(p^2-p)} \|Z\|_p$, $p\ge 2$.
\end{theorem}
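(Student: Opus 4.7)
The plan is to combine the three ingredients already established just above: the local orthogonality of $W = A\star Z$ from Lemma \ref{locort}, the quadratic variation estimate $d\langle W,W\rangle \le 4\, d\langle Z,Z\rangle$ from Lemma \ref{subord4}, and the sharp $L^p$ inequality for orthogonal subordinate martingales supplied by Theorem \ref{BaJthMart}. The only mismatch between what we have in hand and what Theorem \ref{BaJthMart} requires is that the latter assumes the sharper subordination $d\langle W, W\rangle \le d\langle Z, Z\rangle$, whereas Lemma \ref{subord4} costs us a factor of $4$.

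To bridge this gap I would simply rescale: set $\widetilde{W} := W/2$. Scalar multiplication preserves the two relations \eqref{ortheq1}, so $\widetilde{W}$ is still orthogonal in the required sense. Moreover,
$$
d\langle \widetilde W,\widetilde W\rangle \;=\; \tfrac14\, d\langle W,W\rangle \;\le\; d\langle Z,Z\rangle,
$$
by Lemma \ref{subord4}. Hence Theorem \ref{BaJthMart} applies to the pair $(\widetilde W, Z)$ and yields
$$
\|\widetilde W(t)\|_p \;\le\; \sqrt{(p^2-p)/2}\;\|Z(t)\|_p, \qquad p \ge 2.
$$
Multiplying through by $2$ and using $\|W\|_p = 2\|\widetilde W\|_p$ gives exactly $\|W(t)\|_p \le \sqrt{2(p^2-p)}\,\|Z(t)\|_p$, which is the claim.

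There is no real obstacle here; the statement is essentially a packaging of the preceding lemmas and theorem. The final constant $\sqrt{2(p^2-p)}$ arises as $2 \cdot \sqrt{(p^2-p)/2}$, with the leading factor of $2$ being precisely $\sqrt{4}$, where $4$ is the constant produced by Lemma \ref{subord4}.
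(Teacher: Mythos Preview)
Your proof is correct and follows exactly the approach the paper indicates: it merely states that Theorem \ref{FG1} follows from Theorem \ref{BaJthMart} together with Lemmas \ref{locort} and \ref{subord4}, and you have supplied the obvious rescaling $\widetilde W = W/2$ that converts the factor $4$ in Lemma \ref{subord4} into the hypothesis $d\langle W,W\rangle \le d\langle Z,Z\rangle$ required by Theorem \ref{BaJthMart}.
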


What happens in Theorems \ref{BaJthMart} for $1<p<2$ and in Theorem \ref{JVVthMart} for $p>2$ is quite interesting, 
especially because these problems have such a close connection to estimates of the Ahlfors--Beurling operator, and because these 
problems exercise a lot of resistance. See also \cite{BaWa1} and \cite[Section 5]{VaVo} where it is shown how a big class of 
singular operators can be obtained from martingale transforms.
\medskip

\noindent{\bf Acknowledgments}. We are very grateful to Vasily Vasyunin who read the preliminary text and made many useful remarks. We are also very grateful to Fedja Nazarov for useful discussion and valuable remarks.

\section{Left-Right orthogonality. Main theorem}
\label{LR}

As above, let us consider two $\R^2$--valued martingales $Z=(X,Y)$ and $W=(U,V)$, both on the filtration of the $2$--dimensional Brownian motion. Using the previous notations, we write
\begin{equation*}
\overrightarrow{x}(t)=\nabla X(t),\quad \overrightarrow{y}(t)=\nabla Y(t),\quad
\overrightarrow{u}(t) =\nabla U(t),\quad \overrightarrow{v}(t)=\nabla V(t)\,.
\end{equation*}
Here the symbol $\nabla$ stands for ``stochastic gradient" of our martingales, somewhat abusing the notations.
We assume the pointwise orthogonality
\begin{equation}
\label{ortheq3}
\overrightarrow{x}(t)\cdot \overrightarrow{y}(t)=0\,,\, |\overrightarrow{x}(t)|=|\overrightarrow{y}(t)|\,,\,
\overrightarrow{u}(t)\cdot \overrightarrow{v}(t)=0\,,\, |\overrightarrow{u}(t)|=|\overrightarrow{v}(t)|\,.
\end{equation}

We also assume the pointwise subordination
\begin{equation}
\label{subordeq3}
|\overrightarrow{u}(t)|^2 +|\overrightarrow{v}(t)|^2\le |\overrightarrow{x}(t)|^2+|\overrightarrow{y}(t)|^2\,.
\end{equation}

To formulate our main result, let us recall that the Legendre function $L_{\alpha}(s)$ of order $\alpha$ is the unique (up to a multiplicative constant) bounded near $1$ solution of the Sturm--Liouville equation
\begin{equation*}
((1-s^2)y'(s))' +\alpha (\alpha +1) y(s) =0\,.
\end{equation*}
It is the hypergeometric function $_2F_1(-\alpha, \alpha+1, 1; \frac{1-s}{2})$.

For any $p\in (2, \infty)$ consider the positive number $\alpha_p$ such that $\alpha_p (\alpha_p +1) =p$ that is
$$
\alpha_p=\frac{\sqrt{1+4p}-1}{2}\,.
$$
Consider the {\bf largest} zero of $L_{\alpha_p}(s)$ on 
$(0,1)$. Denote it by $z_p$.  

\begin{theorem}
\label{MainLR}
Let $p\ge 2$, let $Z,W$ satisfy \eqref{ortheq3} and \eqref{subordeq3}. Then  
\begin{equation*}
\|W\|_p\le \frac{1+z_p}{1-z_p}\,\|Z\|_p\,.
\end{equation*}
Moreover, this constant is sharp.
\end{theorem}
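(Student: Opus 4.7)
The plan is to apply Burkholder's Bellman function method. We seek $U\colon\C\times\C\to\R$ such that (a) $U(z,w)\ge|w|^p-C^p|z|^p$ pointwise, with $C=(1+z_p)/(1-z_p)$, (b) $U(0,0)\le 0$, and (c) $U(Z_t,W_t)$ is a supermartingale for every pair $(Z,W)$ satisfying \eqref{ortheq3}--\eqref{subordeq3}. These three properties together yield $\E|W_t|^p\le C^p\E|Z_t|^p$. Now the hypotheses \eqref{ortheq3}--\eqref{subordeq3} are invariant under independent phase rotations $(Z,W)\mapsto(e^{i\theta_1}Z,e^{i\theta_2}W)$ and under joint scaling $(Z,W)\mapsto(\lambda Z,\lambda W)$, so I may seek $U(z,w)=u(|z|,|w|)$ with $u$ positively $p$-homogeneous. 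Introducing the variable
\begin{equation*}
s=\frac{|w|-|z|}{|w|+|z|}\in(-1,1),
\end{equation*}
homogeneity lets me write $u(r,\rho)=(r+\rho)^p F(s)$ for an unknown $F\colon(-1,1)\to\R$, to be glued $C^1$-smoothly to the raw majorant $\rho^p-C^p r^p$ along the critical ray $\rho=Cr$, which corresponds to $s=z_p$ via the relation $C=(1+z_p)/(1-z_p)$.

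Next, apply Itô's formula to $u(|Z_t|,|W_t|)$. Because each of $Z,W$ is a conformal martingale (orthogonal, equinorm components), $|Z|$ and $|W|$ behave like time-changed two-dimensional Bessel processes, and the Itô drift of $u$ depends only on the scalars $|\overrightarrow{x}|^2+|\overrightarrow{y}|^2$, $|\overrightarrow{u}|^2+|\overrightarrow{v}|^2$, together with a single cross-coupling parameter encoding the relative rotation between the two orthogonal frames. Maximizing the drift over the admissible cross-couplings while saturating \eqref{subordeq3} reduces the supermartingale requirement, after some algebra, to the single scalar ODE
\begin{equation*}
\bigl((1-s^2)F'(s)\bigr)'+p\,F(s)=0,\qquad s\in(-1,1),
\end{equation*}
which is precisely the Legendre equation with $\alpha(\alpha+1)=p$, i.e., $\alpha=\alpha_p$. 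The requirement that $F$ remain bounded as $s\to 1^-$ (the locus $|Z|=0$) then forces $F$ to be a multiple of $L_{\alpha_p}$, normalized so that $F(1)>0$.

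The requirement $u(r,\rho)\ge\rho^p-C^p r^p$ and smooth matching at the critical ray yield $F(z_p)=0$ and $F\ge 0$ on $(z_p,1]$. This selects $z_p$ as the \emph{largest} zero of $L_{\alpha_p}$ in $(0,1)$: any smaller zero would introduce a sign change of $F$ between itself and $s=1$, violating the pointwise majorization. Hence $C=(1+z_p)/(1-z_p)$. Sharpness is established by constructing a family of extremal pairs: start with a complex Brownian motion $Z$, couple $W$ pointwise via the orthogonal-frame rotation that realizes the worst-case drift identified above, and follow the resulting joint trajectory until the pair is driven toward the critical ray $s=z_p$. A standard stopping-time argument then produces $\|W\|_p/\|Z\|_p$ arbitrarily close to $(1+z_p)/(1-z_p)$.

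The main obstacle is the algebraic reduction in the second step: one must identify the worst-case admissible coupling $(\overrightarrow{u},\overrightarrow{v})$ given $(\overrightarrow{x},\overrightarrow{y})$ under pointwise orthogonality and \eqref{subordeq3}, and show that the resulting expression in $F$, $F'$, $F''$ cleanly factors through the Legendre operator $((1-s^2)F')'+pF$. A secondary delicate point is verifying the majorization globally (beyond the critical ray), which needs monotonicity and concavity properties of $L_{\alpha_p}$ between $z_p$ and $s=1$, and checking that the ``bad-region'' piece $\rho^p-C^p r^p$ continues to satisfy the supermartingale inequality on $\{s\le z_p\}$ (which it does trivially if the derivative-matching at $s=z_p$ is correct and $F$ is concave on an appropriate subinterval).
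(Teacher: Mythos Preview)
Your overall architecture matches the paper's: Bellman function, reduction by rotational symmetry and homogeneity to a one-variable function $F(s)$ with $s=(|w|-|z|)/(|w|+|z|)$, a piecewise candidate equal to a multiple of $L_{\alpha_p}$ on $[z_p,1]$ and to the raw obstacle on $[-1,z_p]$, glued $C^1$ at $s=z_p$. But the step you label ``some algebra'' is where the proposal has a real gap.

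The supermartingale condition does \emph{not} reduce to the single Legendre equation. After passing to the two radial variables $x=|z|$, $y=|w|$, the drift condition becomes (paper's \eqref{Bellineq1phi})
\[
\frac{\phi_x}{x}|h|^2+\phi_{xx}|h|^2+2\phi_{xy}(h\cdot k)+\phi_{yy}|k|^2+\frac{\phi_y}{y}|k|^2\le 0\qquad\text{for all }|k|\le|h|,
\]
a quadratic in $|h|,|k|,h\cdot k$. Maximising over this set yields \emph{three} independent inequalities (\eqref{minus}, \eqref{plus}, \eqref{middle} in the paper), only the first of which is the Legendre inequality $((1-s^2)F')'+pF\le 0$. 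On $[z_p,1]$ your candidate satisfies that one as an equality, but the second condition, $\tilde D F=\frac{DF}{1-s^2}+KF\le 0$ with $KF=p(p-1)F-2(p-1)sF'-(1-s^2)F''$, must be verified separately; for the Legendre solution this reduces to $2sL_{\alpha_p}'-pL_{\alpha_p}\ge 0$ on $[z_p,1]$, which the paper obtains from the \emph{strict convexity} of $L_{\alpha_p}$ on $[z_p,1)$ (Lemma~\ref{convf1}). For non-integer $\alpha$ this convexity is a genuine lemma, not a by-product of root interlacing. The discriminant-type condition \eqref{middle} must also be checked on both pieces. Your remark that the obstacle piece satisfies the supermartingale inequality ``trivially'' is too quick: the paper verifies all three conditions there (the identity $Kh_c\equiv 0$ handles \eqref{plus1}, the inflection-point location handles \eqref{minus1}, and Lemma~\ref{middlelemma} handles \eqref{middle}).

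Your sharpness sketch is also a gap. The paper does not build extremal trajectories. Instead it proves (Lemma~\ref{supersol}) by a Wronskian comparison that any supersolution of the Legendre equation positive at $z_p$ must tend to $-\infty$ at $s=1$, so no Bellman candidate with $c<c_p$ can majorise the obstacle on $[z_p,1]$; an alternative argument (Section~\ref{sh}) recasts the problem as the least superharmonic majorant of an explicit function in $\R^3$ via spherical coordinates, where harmonicity off the contact set forces the Legendre solution. Your ``follow the worst-case rotation and stop near the critical ray'' idea is not a proof as written: you have neither identified the worst-case coupling explicitly (it changes with $s$, which is exactly why three inequalities appear above) nor shown that the stopped pair saturates the bound.
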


\noindent{\bf Remark.} In a recent preprint of Ba\~nuelos and Osekowski \cite{BO} this theorem is extended to $0<p<2$. Moreover, it is extended to conformal martingales in $\R^d$. The extension is in the language of Bessel processes, which allows for non-integer $d$ as well!

\bigskip

Let $J_0$ be the Bessel function of the zero order,
$$
J_0(x)=\sum_{n\ge 0}(-1)^n\frac{x^{2n}}{2^{2n}(n!)^2}.
$$
Denote its first positive zero by $j_0$. It is known (see, for example, 
\cite[Section 15.51]{WAT}) that
$$
j_0\approx 2.4048.
$$
Furthermore (see Section~\ref{Asymptotics}),
$$
\lim_{p\to\infty}\frac1p\cdot\frac{1+z_p}{1-z_p}=\frac{4}{j^2_0}.
$$
Since $j_0>2\sqrt[4]{2}$, our theorem gives better linear asymptotics 
for the constant as $p\to\infty$ than that in Theorem \ref{BaJthMart}.

To prove Theorem~\ref{MainLR} we are going to introduce the following Bellman function, a variant of Burkholder's function from \cite{Bu1}--\cite{Bu7}, which will work for orthogonal martingales.

\section{The Bellman function}

We consider the martingales given by the stochastic integrals
\begin{gather*}
X(t) = X(0) + \int_0^t \overrightarrow{x}(s)\cdot dB_s\,,\quad 
Y(t) = Y(0) + \int_0^t \overrightarrow{y}(s)\cdot dB_s\,,\\
U(t) = U(0) + \int_0^t \overrightarrow{u}(s)\cdot dB_s\,,\quad 
V(t) = V(0) + \int_0^t \overrightarrow{v}(s)\cdot dB_s\,,\\
M(t) = M(0) + \int_0^t \overrightarrow{m}(s) \cdot dB_s\,.
\end{gather*}
We assume that the martingale $Z(t)=(X(t),Y(t))$ satisfies the condition 
\begin{equation*}
\lim_{t\rightarrow\infty} \E|Z(t)|^p<\infty\,.
\end{equation*}
(Since $\E|Z(t)|^p$ is monotone, we have $\lim_{t\rightarrow\infty} \E|Z(t)|^p=\sup_{t\ge 0} \E|Z(t)|^p$.) 
The martingale $M(t)$ is any martingale majorazing the sub-martingale 
$|Z(t)|^p$. Here we assume 
that the $5\times 2$-matrix of random processes 
$$
e(s):=(\overrightarrow{x}(s),\overrightarrow{y}(s), \overrightarrow{u}(s), \overrightarrow{v}(s), \overrightarrow{m}(s))^T
$$ 
satisfies pointwise the condition $e(s) \in A$, where 
\begin{gather*}
A=\{e\in M_{5\times 2}: e_{11}e_{21}+ e_{12}e_{22}=0, e_{11}^2+e_{12}^2 =e_{21}^2+e_{22}^2, e_{31}e_{41}+ e_{32}e_{42}=0,\\ 
e_{31}^2+e_{32}^2 =
e_{41}^2+e_{42}^2,  e_{31}^2+e_{32}^2 +e_{41}^2+e_{42}^2\le e_{11}^2+e_{12}^2 +e_{21}^2+e_{22}^2\}\,.
\end{gather*}
The random process $e(s)$ is also assumed to be a non-anticipatory process, that is, $e(s)$ is measurable with respect to
the $\sigma$-algebra generated by the $2$--dimensional Brownian motion $\{B_{\tau}\,,\, \tau \le s\}$.

Introduce now $W=(U,V)$. For $T=(T_1,T_2,T_3,T_4,T_5)\in\mathbb R^5$ 
we define
$$
\B(T):= \hskip -2 cm\sup_{\vbox{\hskip 2 cm\scriptsize $\text{all  non-anticipatory processes}\,\, e_{ij} \,\,\text{such that} \,\,e(s)\in A,$
\vskip 0.04 cm\hskip 1.2 cm 
$X(0)=T_1$, $Y(0)=T_2$, $U(0)=T_3$, $V(0)=T_4$, $M(0)=T_5$, $M(t)\ge |Z(t)|^p$}}\hskip -3 cm\{\lim_{t\rightarrow\infty} \E|W(t)|^p\}\,.
$$

It is convenient to change the notations and write everything in the following more compact form of a motion in $\R^5$:
$$
R(t)=R(0) +\int_0^t e(s) dB_s\,.
$$
Then
$$
\B(T):= \hskip -3 cm\sup_{\vbox{\hskip 2.2 cm\scriptsize $\text{all  non-anticipatory processes}\,\, e_{ij} \,\,\text{such that} \,\,e(s)\in A,$
\vskip 0.04 cm\hskip 3.6 cm 
$R(0)=T$, $R_5(t)\ge (R_1(t)^2+ R_2(t)^2)^{p/2}$}}
\hskip -3.7 cm\{\lim_{t\rightarrow\infty} \E(R_3(t)^2+ R_4(t)^2)^{p/2}\}\,.
$$
The function $\B(T)$ is defined on the following convex domain inside $\R^5$:
$$
\Omega:=\{T\in \R^5: T_5 \ge (T_1^2 +T_2^2)^{p/2}\}\,.
$$

\subsection{Properties of the Bellman function}
\label{Bellf}

Let us fix a positive time $t$. 
Choose any non-anticipatory process $e(\tau),\, 0\le \tau < t\,,$ satisfying the above restrictions. 
If we start from a point $R(0)\in \Omega$, we obtain the points 
$P_t=R(t,\omega)$. These are our starting data now.

Choose a matrix process $e(s)\,, s\ge t$, for a given $R(t, \omega)$, that attains the supremum in the definition of
$\B(R(T))$ up to a small $\varepsilon>0$. Then the process $e$, equal to  
$e(\tau)$, $0\le \tau < t$, $e(s)$, $s>t$, should be compared to the processes for the starting  data $T=R(0)$
giving the supremum in the definition of $\B(R(0))$.  Let us do this comparison.
Introduce 
$$
F(T)=(T_3^2 +T_4^2)^{p/2}\,.
$$
Let $R(t)$ be the martingale driven by  $e$ constructed above.
Using the formula of full probability and stationarity of Brownian motion  $B_s$ we can write ``Bellman's principle":
$$
\B(R(0))\ge \E F(R(\infty)) = \E \E( F(R(\infty))| R(t,\omega) =P_t)\ge\E\B(R(t))-\varepsilon\,.
$$
In other words,
$$
\E (\B(R(t))-\B(R(0)))\le 0\,.
$$
Since $F$ is convex, and $R(t)$ satisfies the martingale property for any initial point $T=R(0)$ in $\Omega$, 
we have obviously 
\begin{equation*}
\B(T)\ge \B(R(t)) \ge \E F(R(\infty)) \ge F(\E R(\infty))= F(R(0)) = (T_3^2+T_4^2)^{\frac{p}2}\,.
\end{equation*}
Now we apply the It\^o formula for the difference:
$$
\E (\B(R(t))-\B(R(0)))=\frac12\int_0^t\E \sum_{k,l=1}^5 \frac{\partial^2 \B}{\partial T_k\partial T_l} \,(dR_k(s)\cdot dR_{l}(s))\,ds\,;
$$
we use here that $\E |B_{s+\Delta s}-B_s|^2 =\Delta s$.
Note that by the formulas at the beginning of the section, $dR_j(s)$ is exactly $e_{j}(s)$, that is the $j$-th row of the matrix $e$. 
Therefore,
\begin{gather*}
\E (\B(R(t))-\B(R(0)))=\frac12\int_0^t\E \sum_{k,l=1}^5 \frac{\partial^2 \B}{\partial T_k\partial T_l}\,( \overrightarrow{e_k}(s)\cdot \overrightarrow{e_l}(s))\,ds \\
=\frac12 \int_0^t\E \,\text{trace}\, (e(s)^{T} d^2 \B \,e(s))\, ds\le 0\,,
\end{gather*}
where $d^2 \B$ denotes the Hessian (matrix) of $\B$.

This formula holds for all non-anticipatory matrix processes $e(s)$, $0\le s\le t$. 
We have already tacitly assumed the smoothness of $\B$. Using this assumption again, we divide the latter inequality 
by $t$ and pass to the limit $t\rightarrow 0$. Then we get:
\begin{equation}
\label{Bellineq}
-\text{trace}\, (e(s)^{T} d^2 \B\, e(s))=-\sum_{k,l=1}^5 \frac{\partial^2 \B}{\partial T_k\partial T_l}\, (\overrightarrow{e_k}\cdot \overrightarrow{e_l})\ge 0\,\qquad e\in A\,.
\end{equation}

Actually we might hope to have more (and these hopes will be, although only partially, fulfilled):
\begin{equation}
\label{Belleq}
\max_{e\in A, e\neq 0}\sum_{k,l=1}^5 \frac{\partial^2 \B}{\partial T_k\partial T_l}\,( \overrightarrow{e_k}\cdot \overrightarrow{e_l})=0,.
\end{equation}
We will not use \eqref{Belleq} in the future. As we told, it was just a hope.

A more rigorous analysis of how to obtain \eqref{Belleq} can be found in \cite{K}. 
Still, it is not totally clear what conditions guarantee that ``each state has the best control". 

Note that each vector $e_j, j=1,\ldots,5$ has two coordinates. Let us unite all first coordinates 
and call the corresponding $5$-vector $e^1$, similarly we get $e^2$. 
Then \eqref{Bellineq} can be rewritten as
\begin{equation*}
\text{trace}\, (e^{T} d^2 \B\, e)=(d^2\B\, e^1)\cdot e^1 + (d^2\B\, e^2)\cdot e^2 \le 0\,,\qquad e\in A\,.
\end{equation*}

Similarly, we might hope to have
\begin{equation*}
\max_{e\in A, e\neq 0
}\text{trace}\, (e^{T} d^2 \B\, e)=\sup_{e=(e^1, e^2)\in A
}((d^2\B\, e^1)\cdot e^1 + (d^2\B\, e^2)\cdot e^2 )=0\,.
\end{equation*}


\begin{theorem}
\label{above}
Let $\Psi$ be any function 
such that
\begin{equation}
\text{\rm trace}\, (e(s)^{T} d^2 \Psi\, e(s))\le 0\,,\qquad e\in A \label{190}
\end{equation}
(in the sense of distributions),
\begin{equation}
\Psi(T) \ge F(T)\,,
\label{obstacle11}
\end{equation}
and let for some $c>0$ we have
\begin{equation}
\label{191}
\Psi(T)\le c^p T_5\,,
\end{equation}
for all $T$ satisfying $T_1=T_2=T_3=T_4=0$.

Then $\|W(t)\|_p\le c \|Z(t)\|_p$, for any time $t$ and any two orthogonal martingales $W, Z$ such that 
$W$ is differentially subordinated to $Z$ in the sense of \eqref{subordeq3}.
\end{theorem}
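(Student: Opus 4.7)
The plan is to apply It\^o's formula to $\Psi$ evaluated along a five-dimensional martingale that encodes the data $(X,Y,U,V)$ together with a chaperone martingale $M$ dominating the submartingale $|Z|^p$. The concavity-type inequality \eqref{190} will convert into a non-positive drift, while \eqref{obstacle11} at time $t$ and \eqref{191} at time $0$ will bracket $\E\Psi(R(t))$ between $\E|W(t)|^p$ and $c^p\E|Z(t)|^p$.

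First, I would reduce (in the standard way) to the case $X(0)=Y(0)=U(0)=V(0)=0$. Given orthogonal subordinate $Z,W$ with $\E|Z(t)|^p<\infty$, fix $t>0$ and set
$$M(s):=\E[|Z(t)|^p\mid \F_s],\quad 0\le s\le t.$$
Since $|Z|^p$ is a submartingale, $M(s)\ge |Z(s)|^p$, and $M(0)=\E|Z(t)|^p$. By the Brownian martingale representation theorem, $dM(s)=\overrightarrow{m}(s)\cdot dB_s$ for some predictable $\overrightarrow{m}$. Put $R(s):=(X(s),Y(s),U(s),V(s),M(s))$ and stack $e(s):=(\overrightarrow{x},\overrightarrow{y},\overrightarrow{u},\overrightarrow{v},\overrightarrow{m})^T$. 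The orthogonality \eqref{ortheq3}, subordination \eqref{subordeq3}, and the bound $M\ge|Z|^p$ ensure $e(s)\in A$ pointwise and $R(s)\in\Omega$ almost surely.

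Next, since \eqref{190} is only distributional, I would pass to a mollification $\Psi_\e:=\Psi\ast\vf_\e$ in the interior of $\Omega$ near the trajectory. Since \eqref{190} is linear in $D^2\Psi$, the inequality is inherited pointwise by $\Psi_\e$ for every $e\in A$. It\^o's formula gives
$$\Psi_\e(R(t))=\Psi_\e(R(0))+\int_0^t\nabla\Psi_\e(R(s))\cdot dR(s)+\frac12\int_0^t\mathrm{trace}\bigl(e(s)^T D^2\Psi_\e(R(s))\,e(s)\bigr)\,ds.$$
Localizing by stopping times $\tau_N$ that keep $R$ bounded (so the stochastic integral is a true martingale) and taking expectation, the drift integrand is $\le 0$ and the stochastic piece vanishes, yielding $\E\Psi_\e(R(t\wedge\tau_N))\le\Psi_\e(R(0))$. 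Sending $N\to\infty$ and $\e\to 0$ and using the obstacle $\Psi\ge F\ge 0$ together with the $L^p$ control on $R$ inherited from $M$, one concludes $\E\Psi(R(t))\le\Psi(R(0))$.

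Finally, I would close the argument by invoking the two sandwich bounds. At time $t$, \eqref{obstacle11} gives $\Psi(R(t))\ge F(R(t))=|W(t)|^p$, so $\E|W(t)|^p\le \E\Psi(R(t))$. At time $0$ the starting point is exactly $(0,0,0,0,\E|Z(t)|^p)$, and \eqref{191} provides $\Psi(R(0))\le c^p\E|Z(t)|^p$. Chaining the three estimates yields $\E|W(t)|^p\le c^p\E|Z(t)|^p$, i.e.\ $\|W(t)\|_p\le c\|Z(t)\|_p$. The main obstacle I anticipate is the It\^o step: $\Psi$ is only known to be a distributional supersolution of \eqref{190}, so the mollification plus a localization argument is necessary, and one must control the growth of $\Psi$ along $R$ well enough to pass the expectation through the limits $N\to\infty$ and $\e\to 0$. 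This is typically handled by an a priori polynomial bound on $\Psi$ on $\Omega$ satisfied by the actual Bellman candidate constructed later, combined with the integrability of $R$ inherited from $\E|Z(t)|^p<\infty$ and $M\in L^1$.
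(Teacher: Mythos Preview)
Your proposal is correct and follows essentially the same route as the paper: build the five-dimensional martingale with fifth coordinate $M(s)=\E[|Z(t)|^p\mid\F_s]$, apply It\^o's formula to $\Psi(R(\cdot))$ so that \eqref{190} kills the drift, then sandwich $\E\Psi(R(t))$ between $\E|W(t)|^p$ (via \eqref{obstacle11}) and $c^p\E|Z(t)|^p$ (via \eqref{191} at the zero starting point). The paper's proof is terser and omits the mollification/stopping-time layer and the explicit reduction to $R_1(0)=\cdots=R_4(0)=0$ that you supply; these are exactly the technical points that make the distributional hypothesis \eqref{190} usable, so your additions are appropriate rather than a deviation.
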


\begin{proof}
Fix $t\ge 0$ and denote $R_1:=X$, $R_2:=Y$, $R_3:=U$, $R_4:=V$, 
$M:=|(R_1(t), R_2(t))|^p$. Consider the martingale 
$R_5(s) = \E(M|\mathcal{F}_s)$, 
where $\mathcal{F}_s$ is the $\sigma$-algebra generated by $B_{1}(\tau), B_{ 2}(\tau)$ for $\tau\le s$. 
Let us consider $ \E \Psi(R(s))$. If we apply It\^o's formula to 
$\Psi(R(s))$ on $[0,t]$ and take the expectation, we get
$$
\E\Psi(R(t))=\frac12\E\int_0^t \sum_{k,l=1}^5 \frac{\partial^2 \Psi }{\partial T_k\partial T_l} (dR_k(s) \cdot dR_l(s))\,ds
+\Psi(R(0))\,.
$$
Next we use \eqref{190}--\eqref{191} and the convention $R_3=U$, $R_4 =V$ to get
$$
\E|(U(t), V(t))|^p \le \Psi(R(0)) \le c^p  R_5(0)\,.
$$
Since $R_5(s)$ is a martingale, we have 
$$ 
R_5(0) =\E R_5(0) =\E R_5(t) =\E M= \E|(R_1(t), R_2(t))|^p\,.
$$
It remains to note that we have the convention: $R_1=X$, $R_2=Y$.
\end{proof}

\begin{cor}
\label{192}
The best constant $c$ such that $\B(T)\le c^p T_5$ for all $T$ satisfying $T_1=T_2=T_3=T_4=0$ 
coincides with the best constant $c$ such that $\|W(t)\|_p\le c \|Z(t)\|_p$, for any time $t$ and any two orthogonal martingales $W, Z$ such that 
$W$ is differentially subordinated to $Z$.
\end{cor}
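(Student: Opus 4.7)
The plan is to verify the two directions of the equivalence separately, both directly from the definition of $\B$ in Section 3. No new analytic input is needed: Theorem \ref{above} already packages the heavy lifting (the It\^o/Bellman PDE argument), and the corollary is really a dictionary between admissible $5$-dimensional processes and orthogonal martingale pairs with subordination.

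For the implication ``Bellman bound implies martingale inequality'', I would take orthogonal martingales $Z=(X,Y)$, $W=(U,V)$ satisfying \eqref{ortheq3} and \eqref{subordeq3} with $Z(0)=W(0)=0$, fix a time $t$, and freeze all processes past time $t$. Set $R_1=X$, $R_2=Y$, $R_3=U$, $R_4=V$, and take
$$
R_5(s):=\E(|Z(t)|^p\mid \F_s),
$$
which is a martingale, dominates $|Z(s)|^p$ for $s\le t$ by the submartingale property of $|Z|^p$, and equals $|Z(t)|^p$ for $s\ge t$. The martingale representation theorem lets us realize $R_5$ as a stochastic integral against $B$, and the constraints \eqref{ortheq3}--\eqref{subordeq3} translate precisely into $e(s)\in A$. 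Thus $R$ is admissible for the sup defining $\B$ at $T:=(0,0,0,0,\E|Z(t)|^p)$, which lies in the subspace $\{T_1=T_2=T_3=T_4=0\}$, and
$$
\E|W(t)|^p=\lim_{s\to\infty}\E|W(s)|^p\le \B(T)\le c^p T_5=c^p\E|Z(t)|^p.
$$

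For the converse, fix $T=(0,0,0,0,T_5)$ and any admissible process $R$ with $R(0)=T$. The components $(R_1,R_2)$ and $(R_3,R_4)$ form orthogonal martingales $Z,W$ starting at zero that satisfy \eqref{subordeq3} because $e(s)\in A$. Since $R_5$ is a martingale majorizing $|Z|^p$, we have $\E|Z(s)|^p\le R_5(0)=T_5$ for every $s$. If the martingale inequality $\|W(s)\|_p\le c\|Z(s)\|_p$ is assumed, we obtain $\E|W(s)|^p\le c^p T_5$; letting $s\to\infty$ and then supping over admissible controls yields $\B(T)\le c^p T_5$. The only point requiring any care is the martingale-representation step in the first direction, which is standard for the Brownian filtration under the integrability hypothesis $\E|Z(t)|^p<\infty$ built into the definition of $\B$; this is a bookkeeping issue rather than a genuine obstacle.
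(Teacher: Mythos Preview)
Your proof is correct. The paper does not spell out an argument for the corollary; it is positioned as an immediate consequence of Theorem~\ref{above} together with the definition of $\B$. The implicit paper proof of the direction ``Bellman bound $\Rightarrow$ martingale inequality'' would be to apply Theorem~\ref{above} with $\Psi=\B$, using the heuristically derived PDE property \eqref{Bellineq} as hypothesis \eqref{190}. You instead bypass Theorem~\ref{above} entirely for this direction: you build the admissible $5$-tuple $R$ (exactly the same construction of $R_5(s)=\E(|Z(t)|^p\mid\F_s)$ as in the proof of Theorem~\ref{above}) and then invoke the \emph{definition} of $\B$ as a supremum over admissible controls rather than the It\^o/PDE argument. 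This is slightly more elementary and has the advantage of not relying on any smoothness of $\B$. The converse direction (martingale inequality $\Rightarrow$ Bellman bound) is the same in both approaches: read off $Z,W$ from the first four components of an admissible $R$ and use that $R_5$ is a martingale majorant of $|Z|^p$, so $\E|Z(s)|^p\le T_5$.
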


It is easy to see several other properties of $\B$. For example, by multiplying our martingales $T_1,T_2,T_3,T_4$ by 
a positive constant $\tau$ one obtains:
\begin{equation*}
\B(\tau T_1,\tau T_2,\tau T_3,\tau T_4,\tau^p T_5)=\tau^p\B(T)\,.
\end{equation*}

Let $S$ be a unitary operator on $\R^2$. Note that if we multiply its matrix by a matrix
$$
\begin{bmatrix} 
x_1 & x_2\\
y_1 & y_2
\end{bmatrix}
$$
such that its rows are orthogonal and the norms of rows are equal, then we have again a matrix 
with orthogonal rows having the same norm.

For us this means that given $(X,Y)^T$, $(U, V)^T$ (or $(R_1, R_2)^T, (R_3, R_4)^T$ in other notations), 
we can apply $S$ to these vectors, and we can multiply the corresponding martingales for any $t$ by 
the constant unitary matrix of $S$. Note that the process $e$ will be transformed. Namely, the fifth row stays the same, 
but the rows $1,2$ and the rows $3,4$ form matrices which are multiplied on the left by the matrix of $S$. 
As we have found out, the new matrix $e'$ has the same properties of rows as $e$! So again $e'\in A$ pointwise. 
Of course, $|S\cdot (U,V)^{T}(t)|=|(U,V)^T(t)|$ pointwise. This reasoning gives us the following property of $\B$:
\begin{equation*}
\B(T)=: b((T_1^2+T_2^2)^{1/2}, (T_3^2+T_4^2)^{1/2}, T_5)\,.
\end{equation*}

The next property becomes obvious when we take $R_1(t)=R_3(t), R_2(t)=R_4(t)$:
\begin{equation*}
(T_1^2+T_2^2)^{p/2} \le \B(T)\,.
\end{equation*}

\subsection{Reduction of the number of variables}
\label{reduction}

Fix some $c>0$ and $T'=(T_1, T_2, T_3, T_4)$. Recall that $(T', T_5)\in \Omega$ if and only if $T_5\ge (T_1^2+T_2^2)^{p/2}$, 
and consider
$$
\Phi(T') =\Phi_c(T') =\sup_{T=(T', T_5): T\in \Omega} (\B(T) -c^p T_5)\,.
$$

If $c$ is larger than the best constant in Corollary~\ref{192}, then $\Phi$ is well defined at $0$, and hence, 
everywhere.

Obviously, we have
\begin{gather}
\label{homogPhi}
\Phi(tT')=t^p\Phi(T')\,,\\
\label{rotatPhi}
\Phi(T')=: \phi((T_1^2+T_2^2)^{1/2}, (T_3^2+T_4^2)^{1/2})\,.
\end{gather}

By the submartingale property of $|W(t)|^p$ we obtain:
\begin{equation*}
(T_3^2+T_4^2)^{p/2} \le \B(T)\,.
\end{equation*}

What is much less easy (but still true) is that the concavity in the sense of \eqref{Bellineq} is also preserved. 
It is much less easy because the supremum of concave functions is not obliged to be concave. But if we have 
a concave function of several variables and form a new function which is the supremum of the original function 
over one of the variables, then the result is concave again. The same reasoning gives  
\begin{equation}
\label{BellineqPhi}
-\text{trace} \,(e'^{T} d^2\Phi\, e')=-\sum_{k,l=1}^4 \frac{\partial^2 \Phi}{\partial T_k\partial T_l} \,(\overrightarrow{e_k}\cdot \overrightarrow{e_l})\ge 0\,,\qquad e\in A\,.
\end{equation}
Here $e'$ denotes the matrix $e$ with deleted fifth row.

Inequality \eqref{BellineqPhi} can be rewritten (again, in the sense of distributions) as
\begin{equation}
\label{Bellineq1Phi}
\text{trace} \,(e'^{T} d^2\Phi\, e')=(d^2\Phi\, e^1)\cdot e^1 + (d^2\Phi\, e^2)\cdot e^2 \le 0\,,\qquad e\in \tilde A\,.
\end{equation}
Here $e^1, e^2$ are $4$-vectors, namely $e^1$ is the (column) vector of the first coordinates of all vectors $\overrightarrow{e_k}$, $k=1,2,3,4$, and $e^2$ is the (column) vector of the second coordinates of all vectors $\overrightarrow{e_k}$, $k=1,2,3,4$. 
We write $e^1=(h,k)^T, e^2=(h', k')^T$, where $h, k, h', k'$ are (row) $2$-vectors.
Furthermore, the conditions on $e'$ are the same as those on $e$ but now with the fifth row deleted. We call 
these conditions $\tilde{A}$, and here they are: 
\begin{equation}
\label{tildeA}
|h|=|h'|,\quad |k|=|k'|, \quad h\cdot h'=0, \quad k\cdot k'=0,\quad |k|\le |h|\,.
\end{equation}

The next property is obvious:
\begin{equation}
\label{x02}
(T_3^2+T_4^2)^{p/2} -c^p\,(T_1^2+T_2^2)^{p/2} \le \Phi(T)\,.
\end{equation}

In the opposite direction, starting with a function $\Phi$ and $c>0$ satisfying 
\eqref{homogPhi}, \eqref{rotatPhi}, \eqref{Bellineq1Phi}, \eqref{x02}, we can define 
$$
\Psi(T)=\Phi(T')+c^5T_5,
$$
and apply Theorem~\ref{above}.

\begin{cor}
The best constant $c$ such that there exists a function $\Phi$ satisfying 
\eqref{homogPhi}, \eqref{rotatPhi}, \eqref{Bellineq1Phi}, \eqref{x02}
coincides with the best constant $c$ such that $\|W(t)\|_p\le c \|Z(t)\|_p$, for any time $t$ and any two orthogonal martingales $W, Z$ such that 
$W$ is differentially subordinated to $Z$.
\end{cor}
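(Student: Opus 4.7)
The plan is to prove the two implications between best constants separately.

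\emph{Sufficient direction.} Given a function $\Phi$ satisfying \eqref{homogPhi}--\eqref{x02} with constant $c$, I would set
\begin{equation*}
\Psi(T):=\Phi(T_1,T_2,T_3,T_4)+c^p T_5
\end{equation*}
on $\Omega$ and check the three hypotheses of Theorem~\ref{above}. The boundary normalisation \eqref{191} holds because \eqref{homogPhi} forces $\Phi(0)=0$, so $\Psi=c^pT_5$ on $\{T_1=T_2=T_3=T_4=0\}$; the obstacle condition \eqref{obstacle11} follows from \eqref{x02} combined with $T_5\ge(T_1^2+T_2^2)^{p/2}$; and the distributional trace inequality \eqref{190} reduces to \eqref{Bellineq1Phi}, because $\Psi$ is affine in the last coordinate, so $d^2\Psi$ has vanishing fifth row and column and the contribution of the fifth row of $e\in A$ disappears. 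Theorem~\ref{above} then delivers $\|W(t)\|_p\le c\|Z(t)\|_p$.

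\emph{Necessary direction.} Conversely, assume the martingale inequality $\|W(t)\|_p\le c\|Z(t)\|_p$ holds for every admissible pair. Taking the Bellman function $\B$ from Section~3, I would form its partial Legendre-type transform in the last variable,
\begin{equation*}
\Phi_c(T'):=\sup_{T_5\ge(T_1^2+T_2^2)^{p/2}}\bigl(\B(T',T_5)-c^p T_5\bigr),\qquad T'\in\R^4.
\end{equation*}
Finiteness (indeed $\Phi_c\le 0$) follows because $R_5$ is a martingale majorising $|Z|^p$, so $\E|Z(t)|^p\le\E R_5(t)=T_5$, and then the hypothesis yields $\B(T)\le c^p T_5$. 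The homogeneity \eqref{homogPhi} and the radial symmetry \eqref{rotatPhi} descend directly from the corresponding properties of $\B$ recorded in Section~3, via the substitution $T_5\mapsto\tau^p T_5$ in the supremum. The lower bound \eqref{x02} is obtained by taking $T_5=(T_1^2+T_2^2)^{p/2}$ in the supremum and applying the submartingale bound $\B(T)\ge(T_3^2+T_4^2)^{p/2}$.

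\emph{Main obstacle.} The genuine difficulty is to establish \eqref{Bellineq1Phi} for $\Phi_c$, since a supremum does not in general preserve concavity. My strategy is to exploit the fact that the fifth row of any $e\in A$ is completely unconstrained, so that \eqref{190} must remain valid after maximising the quadratic form in the two free scalars $(e_5^1,e_5^2)$. A direct computation will convert this maximisation into the Schur-complement inequality
\begin{equation*}
H-\frac{h\,h^T}{\partial^2\B/\partial T_5^2}\le 0
\end{equation*}
as a quadratic form on $\tilde A$, where $H$ is the upper-left $4\times 4$ block of $d^2\B$ and $h=(\partial^2\B/\partial T_k\partial T_5)_{k=1}^4$. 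By the envelope theorem, this Schur complement is precisely $d^2\Phi_c$ evaluated at the optimal $T_5$, which is exactly \eqref{Bellineq1Phi}. To bypass smoothness, I would carry out the whole argument distributionally, mollifying $\B$ if necessary; alternatively, one can run it probabilistically by verifying directly that $\Phi_c(R'(s))$ is a supermartingale for every controlled process with $e'(s)\in\tilde A$. Combining the two directions yields the asserted equality of best constants.
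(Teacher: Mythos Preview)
Your plan is correct and follows the paper's own route almost verbatim: the sufficient direction via $\Psi(T)=\Phi(T')+c^pT_5$ and Theorem~\ref{above} is exactly what the paper does, and the necessary direction via the partial supremum $\Phi_c(T')=\sup_{T_5}(\B(T',T_5)-c^pT_5)$ is the construction of Section~\ref{reduction}. The only point of divergence is the justification of \eqref{Bellineq1Phi}: the paper simply invokes the principle that taking the supremum of a concave function over one of its variables preserves concavity, and asserts that ``the same reasoning'' applies to the constrained trace inequality because the fifth row of $e\in A$ is free; your Schur-complement/envelope computation is a concrete way of carrying out that same reasoning, and your probabilistic supermartingale alternative is another. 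Both are legitimate and slightly more explicit than the paper's one-line appeal, but they are not a different idea.
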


\subsection{Another reduction of the number of variables}

The function $\Phi$ has $4$ variables, but the radial symmetry allows us to reduce it to a function of  
only $2$ variables. Namely, using \eqref{homogPhi} and \eqref{rotatPhi} we obtain a function $\phi(x,y)$, 
$x\ge 0$, $y\ge 0$ such that
\begin{equation}
\label{homogphi}
\phi(tx,ty)=t^p\phi(x,y)\,.
\end{equation}

By \eqref{x02}, we have 
\begin{equation*}
y^p -c^p x^p \le \phi(x,y)\,.
\end{equation*}

Now we want to rewrite \eqref{Bellineq1Phi} in terms of $\phi$ using \eqref{tildeA}. Set 
$$
z=\frac{(T_1, T_2)}{|(T_1, T_2)|},\qquad w=\frac{(T_3, T_4)}{|(T_3, T_4)|}.
$$ 
Given a vector $h$, we denote by $h^{\perp}$ the projection of $h$ on 
the direction orthogonal to $z$; given a vector $k$ we denote by $k^{\perp}$ the projection of $k$ on 
the direction orthogonal to $w$,  the same with $h', k'$. 

Set $x=(T_1^2+T_2^2)^{1/2}$, $y=(T_3^2+T_4^2)^{1/2}$, $u_1=u_2=x$, $u_3=u_4=y$. We have
\begin{gather*}
\frac{\partial u_j}{\partial T_j}=\frac{T_j}{u_j},\quad
\frac{\partial \Phi}{\partial T_j}=\frac{\partial \phi}{\partial u_j}\cdot\frac{T_j}{u_j},\qquad 1\le j\le 4,\\
\frac{\partial^2 \Phi}{\partial T_j^2}=\frac{\partial^2 \phi}{\partial u_j^2} \cdot\frac{T_j^2}{u_j^2}+
\frac{\partial \phi}{\partial u_j} \cdot\frac{u_j^2-T_j^2}{u_j^3},\qquad 1\le j\le 4,\\
\frac{\partial^2 \Phi}{\partial T_j\partial T_k}=
\frac{\partial^2 \phi}{\partial u_j^2} \cdot\frac{T_jT_k}{u_j^2}-
\frac{\partial \phi}{\partial u_j} \cdot\frac{T_jT_k}{u_j^3},
\qquad 1\le j,k\le 4,\,u_j= u_k,\,j\not= k,\\ 
\frac{\partial^2 \Phi}{\partial T_j\partial T_k}=\frac{\partial^2 \phi}{\partial x\partial y} \cdot\frac{T_jT_k}{xy},
\qquad 1\le j,k\le 4,\,u_j\not= u_k.
\end{gather*}

Then  
\begin{gather*}
(d^2\Phi \,(h,k)^T)\cdot (h,k)^T + (d^2\Phi \,(h',k')^T)\cdot (h',k')^T=\\
\frac1{x}\frac{\partial \phi}{\partial x} |h^{\perp}|^2 + \frac{\partial^2\phi}{\partial x^2} (h\cdot z)^2 +2\frac{\partial^2\phi}{\partial x\partial y}(h\cdot z)(k\cdot w) +\frac{\partial^2\phi}{\partial y^2}(k\cdot w)^2 +
\frac1{y}\frac{\partial \phi}{\partial y} |k^{\perp}|^2+\\
\frac1{x}\frac{\partial \phi}{\partial x} |h'^{\perp}|^2 + \frac{\partial^2\phi}{\partial x^2} (h'\cdot z)^2 +2\frac{\partial^2\phi}{\partial x\partial y}(h'\cdot z)(k'\cdot w) +\frac{\partial^2\phi}{\partial y^2}(k'\cdot w)^2 +
\frac1{y}\frac{\partial \phi}{\partial y} |k'^{\perp}|^2\\=
\frac1{x}\frac{\partial \phi}{\partial x} |h|^2 + \frac{\partial^2\phi}{\partial x^2} |h|^2 +2\frac{\partial^2\phi} {\partial x \partial y}(h\cdot Sk) +\frac{\partial^2\phi}{\partial y^2}|k|^2 +
\frac1{y}\frac{\partial \phi}{\partial y} |k|^2\,.
\end{gather*}
Here $S$ stands for a unitary operator sending $w$ to $z$. Note that the last expression must be non-positive 
for any $h,k$ such that 
\begin{equation*}
|k|\le |h|\,.
\end{equation*}

Thus, \eqref{Bellineq1Phi} becomes (in the sense of distributions)
\begin{equation}
\label{Bellineq1phi}
\frac1{x}\frac{\partial \phi}{\partial x} |h|^2 + \frac{\partial^2\phi}{\partial x^2} |h|^2 
+2\frac{\partial^2\phi}{\partial x \partial y}(h\cdot k) +\frac{\partial^2\phi}{\partial y^2}|k|^2 +
\frac1{y}\frac{\partial \phi}{\partial y} |k|^2\le 0\,,\quad |k|\le |h|\,.
\end{equation}

\section{Reduction to differential inequalities in one dimension}
\label{DI} 

We can reduce our problem to the following one. We are looking for the smallest $c$ such that the function
$$
h_c(x,y) := y^p-c^px^p
$$ 
can be majorized by a solution of the differential inequality \eqref{Bellineq1phi} satisfying \eqref{homogphi}.
Both $\phi$ and $h_c$ are $p$-homogeneous. Therefore, we can further reduce our problem to that on functions of one real variable.
First note that \eqref{Bellineq1phi} is equivalent to the fact that a certain quadratic polynomial is negative when its argument is bigger than $1$ in absolute value. This is equivalent to three inequalities, the first and the second of which are
\begin{align}
\label{minus}
\frac1{x}\frac{\partial \phi}{\partial x}  + \frac{\partial^2\phi}{\partial x^2}  -2\frac{\partial^2\phi}{\partial x \partial y} +\frac{\partial^2\phi}{\partial y^2}+
\frac1{y}\frac{\partial \phi}{\partial y} &\le 0\,,\\
\label{plus}
\frac1{x}\frac{\partial \phi}{\partial x}  + \frac{\partial^2\phi}{\partial x^2}  +2\frac{\partial^2\phi}{\partial x \partial y} +\frac{\partial^2\phi}{\partial y^2}+
\frac1{y}\frac{\partial \phi}{\partial y} &\le 0\,.
\end{align}
These relations claim just that our quadratic expression is negative when its argument is equal to $\pm 1$.

Let $\D$ denote the discriminant  of our quadratic expression,
$$
\D=\Bigl( \frac{\partial^2\phi}{\partial x\partial y}\Bigr)^2-
\Bigl(\frac1{x}\frac{\partial \phi}{\partial x}+\frac{\partial^2\phi}{\partial x^2}\Bigr)
\Bigl(\frac1{y}\frac{\partial \phi}{\partial y}+\frac{\partial^2\phi}{\partial y^2}\Bigr)\,.
$$
Clearly, if $\D<0$, and if the quadratic polynomial is negative at $\pm 1$ by \eqref{minus}, \eqref{plus}, 
then it is negative for all the arguments exceeding $1$ in absolute value and \eqref{Bellineq1phi} holds. 
If $\D\ge 0$, then \eqref{Bellineq1phi} follows from \eqref{minus}, \eqref{plus}, and the fact that 
the smaller root of the quadratic expression belongs to $(-1,1)$, which is our third inequality:
\begin{equation}
\label{middle}
\bigg|\Bigl|\frac{\partial^2\phi}{\partial x\partial y}\Bigr|-\D^{1/2}\bigg|\le \bigg|\frac{\partial^2\phi}{\partial x^2}+
\frac1{x}\frac{\partial \phi}{\partial x}\bigg|\,.
\end{equation}

Now using homogeneity we write
\begin{gather}
s:= \frac{y-x}{x+y}\,,\quad \frac{y}{x+y}=\frac{1+s}{2}\,,\quad \frac{x}{x+y}=\frac{1-s}{2}\,,\label{x03}\\
\phi(x,y) =(x+y)^p \phi\Bigl(\frac{x}{x+y}, \frac{y}{x+y}\Bigr) = (x+y)^p \phi\Bigl(\frac{1-s}{2}, \frac{1+s}{2}\Bigr)\,,\notag\\
g(s):=\phi\Bigl(\frac{1-s}{2}, \frac{1+s}{2}\Bigr)\,.\notag
\end{gather}
 
On $\{(x,y):x+y=1\}$ we have:
\begin{gather}
\phi_{xx}=p(p-1)g(s)-2(p-1)(1+s)g'(s)+(1+s)^2g''(s)\,,\notag\\
\phi_{yy}=p(p-1)g(s)+2(p-1)(1-s)g'(s)+(1-s)^2g''(s)\,,\notag\\
\phi_{xy} =p(p-1) g(s)-2(p-1) s g'(s) -(1-s^2) g''(s)\,,
\\
\frac{\phi_{x}}{x}=\frac{2p}{1-s} g(s) - \frac{2(1+s)}{1-s} g'(s)\,, 
\\
\frac{\phi_{y}}{y}=\frac{2p}{1+s} g(s) + \frac{2(1-s)}{1+s} g'(s)\,, 
\\
\phi_{xx} - 2 \phi_{xy} +\phi_{yy} =4 g''(s)\,. 
\end{gather}
 
Denote
\begin{gather*}
Kg(s):=p(p-1) g(s)-2(p-1) s g'(s) -(1-s^2) g''(s)\,,\\
Dg(s):= ((1-s^2)g'(s))' +pg(s)\,,\\ 
\tilde{D}g(s):=\frac{Dg(s)}{1-s^2} +Kg(s)\,.
\end{gather*}
 
Then \eqref{minus}, \eqref{plus} can be rewritten correspondingly  as
\begin{gather}
Dg(s) = (1-s^2)g''(s) -2s g'(s) + pg(s) \le 0\,, \qquad s\in [-1,1]\,,\label{minus1} \\
\tilde{D}g(s) = \frac{Dg(s)}{1-s^2} +Kg(s) \le 0\,, \qquad s\in [-1,1]\,.\label{plus1}
\end{gather}

Next we pass to \eqref{middle}. Suppose that $Dg=0$ on an interval $I$.
Then  we can calculate on $\{(x,y):x+y=1,\,\frac{y-x}{x+y}\in I\}$:
\begin{gather*}
\phi_{xy} = p(-2sg'+pg)\,,\\ 
\frac{1}{x} \phi_x +\phi_{xx}= p\bigl(-2(1+s) g' +pg\bigr)\,,\quad \frac{1}{y} \phi_y +\phi_{yy}= p\bigl( 2(1-s) g' +pg\bigr)\,.
\end{gather*}
Therefore, 
$$
\D= 4 p^2g'^2\ge 0\,,
$$
and condition \eqref{middle} becomes 
\begin{equation}
\label{BDA}
\bigl||-2sg'+pg|-2|g'|\bigr|\le |-2(1+s)g'+pg|\,,
\end{equation}
which is just the triangle inequality.

Thus, for any interval $I$ where $g$ is a solution of the Legendre equation $Dg=0$,  
\eqref{middle} is automatically satisfied on $\{(x,y):\frac{y-x}{x+y}\in I\}$.

\section{Case $p=n(n+1)$}
\label{int}

From now on, we assume that $p=\alpha(\alpha+1)$, $\alpha>1$, and
we use the notation $D_{\alpha}$ for the operator $D$ defined in \eqref{minus1}.

If $\alpha=n \in \mathbb{N}$, then $p=n(n+1)$ and the Legendre functions $L_n$ (i.e. the solutions of 
the equation $D_n L_n=0$ bounded near the point $1$) become Legendre polynomials
(see, for instance, \cite[Section 3.8]{C}):
\begin{equation}
\label{Ln}
L_{n}(s) =\frac{1}{2^n n!} \frac{d^n}{ds^n} (s^2-1)^n,\qquad L_{n,a} = a\, L_n\,.
\end{equation}
Note that $L_n(1)=1$.

Let us consider an obstacle function
$$
h_c(s):=\Bigl(\frac{1+s}{2}\Bigr)^p-c^p\Bigl(\frac{1-s}{2}\Bigr)^p\,.
$$
Our first remark is that
\begin{equation*}
D_{\alpha} h_c(s) = D_{\alpha} \Bigl(\Bigl(\frac{1+s}2\Bigr)^p-c^p \Bigl(\frac{1-s}2\Bigr)^p\Bigr) = 
\frac{p}{p-1} (1-s^2) h_c''(s)\,.
\end{equation*}

In particular, the inflection point $i_p$ of $h_c$ coincides with the point where the function $D_\alpha h_c$
changes the sign from positive to negative (when we move from $s=1$ to $s=-1$).

Suppose that $\alpha=n> 1$ is an integer. If $p=n(n+1)$, then the Legendre equation $D_n g=0$ has two linearly independent solutions: one is the Legendre polynomial $L_n$ of degree $n$, and another, $Q_n$, has logarithmic singularities at $x=\pm 1$. 

The following statement is a partial case of Lemma~\ref{minzero} we prove later on:

\begin{lemma}
\label{solutions}
Consider the set $Y_p$ of all linear combinations of these solutions $L_n$ and $Q_n$. For any $y\in Y_p$, let $f(y,p)$ 
be the largest zero of $y$ on $[-1,1]$. Then
\begin{equation*}
\min_{y\in Y_p} f(y,p) = f(L_n, p)=: z_p\,.
\end{equation*}
\end{lemma}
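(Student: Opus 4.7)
The plan is to show that any $y = aL_n + bQ_n \in Y_p$ with $b \neq 0$ has a zero in the open interval $(z_p, 1)$, so $f(y,p) > z_p$. Combined with the trivial case $b=0$, where $f(y,p) = z_p$, this establishes that the minimum equals $z_p$ and is attained exactly at scalar multiples of $L_n$.

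First I would record the relevant classical facts. Since $L_n(1) = 1$, one has $L_n > 0$ on $(z_p, 1)$, and simplicity of the zeros of $L_n$ forces $L_n'(z_p) > 0$. The second Legendre function satisfies $Q_n(s) = \tfrac12 L_n(s)\log\tfrac{1+s}{1-s} - W_{n-1}(s)$ with $W_{n-1}$ a polynomial of degree $\le n-1$, so for $b \neq 0$ the element $y$ has the boundary asymptotics
\[
y(s) = -\tfrac{b}{2}\log(1-s) + O(1), \qquad y'(s) = \tfrac{b}{2(1-s)} + O(1),\qquad s\to 1^-,
\]
and in particular $y(s) \to (\sign b)\cdot\infty$ as $s\to 1^-$.

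The central tool is the Abel identity: since both $y$ and $L_n$ solve $((1-s^2)g')' + pg = 0$, the quantity $J(s) := (1-s^2)(y L_n' - y' L_n)$ is constant, $J \equiv K$. Letting $s\to 1^-$, the term $(1-s^2)yL_n'$ vanishes because $(1-s)\log(1-s)\to 0$, while $(1-s^2)y'L_n \to (1+s)\cdot\tfrac{b}{2}\cdot L_n(s) \to b$, so $K = -b$. Evaluating $J$ instead at $s = z_p$, where $L_n(z_p) = 0$, yields
\[
(1-z_p^2)\,y(z_p)\,L_n'(z_p) = -b,
\]
and since $1-z_p^2 > 0$ and $L_n'(z_p) > 0$, this forces $\sign(y(z_p)) = -\sign(b)$. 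Combined with $\sign(y(s)) \to \sign(b)$ as $s\to 1^-$, the intermediate value theorem supplies a zero of $y$ in $(z_p, 1)$, as required.

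The one delicate point is the boundary evaluation of $K$ at $s=1^-$: both $yL_n'$ and $y'L_n$ individually diverge while $1-s^2$ vanishes, and only the precise matching of rates $(1-s)\log(1-s)\to 0$ against $(1-s)\cdot(1-s)^{-1}\to 1$ yields a finite and, crucially, nonzero, value for $K$. Once this is done the rest is formal; I expect the same argument to extend to the general Lemma \ref{minzero} by replacing $L_n, Q_n$ with the bounded and logarithmically singular solutions of $D_{\alpha_p}g = 0$, using the identical Abel identity on $[z_p,1)$.
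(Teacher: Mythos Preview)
Your proof is correct and is essentially the paper's own argument for the more general Lemma~\ref{minzero}: both compute the Wronskian of $y$ against the bounded solution, evaluate it near $s=1$ and at $s=z_p$ to force $\sign y(z_p)=-\sign b$, and then apply the intermediate value theorem on $(z_p,1)$. (One cosmetic slip: the remainder in your expansion of $y'(s)$ near $1$ is $O(\log\tfrac{1}{1-s})$ rather than $O(1)$, coming from the $\tfrac12 L_n'(s)\log\tfrac{1+s}{1-s}$ term in $Q_n'$, but this does not affect the limit $(1-s^2)y'L_n\to b$ that you actually use.)
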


Consider all the pairs $(f, h_c)$, where $f$ is a solution of the Legendre equation such that $f(1)>1$ and $h_c$ is as above (recall that $h_c(1)=1$), $f>h_c$ on some interval $(x, 1]$, and $f$ and $h_c$ have the same values and the same derivatives at $x$. Here is a question which will occupy our attention almost till the end of this section:

\medskip

\noindent{\bf Question:} What is the smallest $c$ possible for such pairs?

\begin{theorem}
\label{touching}
The smallest $c$ is $\frac{1+z_p}{1-z_p}$.
\end{theorem}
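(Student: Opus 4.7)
The plan is to prove the two halves of the theorem in parallel: construct an explicit pair attaining $c^*:=(1+z_p)/(1-z_p)$, and show no admissible pair can achieve $c<c^*$. The key observation is a double coincidence at $s=z_p$: the Legendre polynomial $L_n$ vanishes at $z_p$ by definition, while the obstacle $h_c$ vanishes at $s_0(c):=(c-1)/(c+1)$, and $s_0(c)=z_p$ precisely when $c=c^*$. Consequently a scalar multiple of $L_n$ can touch $h_{c^*}$ tangentially at $s=z_p$, and $c^*$ is distinguished as the unique value permitting this particular coincidence.

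For the upper bound, I would set $f:=\lambda L_n$ with $\lambda:=h_{c^*}'(z_p)/L_n'(z_p)$; both factors are positive since $z_p$ is the largest zero of $L_n$ with $L_n(1)=1$ (hence $L_n'(z_p)>0$) and $h_c'>0$ identically. Then $f(z_p)=h_{c^*}(z_p)=0$ and $f'(z_p)=h_{c^*}'(z_p)$ automatically. To verify $f\ge h_{c^*}$ on $[z_p,1]$ and $\lambda>1$, set $u:=f-h_{c^*}$, so $Du=-Dh_{c^*}$. The Lagrange identity for the Legendre operator gives $[(1-s^2)(L_n u'-L_n' u)]'=L_n Du-u D L_n=-L_n Dh_{c^*}$. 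Integrating from $z_p$ to $s$ and using the tangency together with $L_n(z_p)=0$ yields $(1-s^2)(L_n u'-L_n' u)=-\Psi(s)$ with $\Psi(s):=\int_{z_p}^s L_n Dh_{c^*}\,dt$. The same identity applied to $h_{c^*}$ in place of $u$ shows $\Psi(1)=0$: the boundary term at $z_p$ vanishes because $L_n(z_p)=h_{c^*}(z_p)=0$, and at $s=1$ because $(1-s^2)=0$. Since $Dh_{c^*}=\frac{p}{p-1}(1-s^2)h_{c^*}''$ and $h_{c^*}''$ has a single sign change in $[z_p,1]$ at the inflection point $i_p>z_p$, the integrand $L_n Dh_{c^*}$ is negative on $(z_p,i_p)$ and positive on $(i_p,1)$; combined with $\Psi(z_p)=\Psi(1)=0$ this forces $\Psi<0$ strictly on $(z_p,1)$. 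Rewriting as $(u/L_n)'=-\Psi(s)/[(1-s^2)L_n^2]$ and integrating from $z_p^+$, where $u/L_n\to 0$ by the double zero of $u$ against the simple zero of $L_n$, gives $u/L_n\ge 0$ on $(z_p,1]$, with strict positivity at $s=1$ yielding $\lambda-1=u(1)>0$.

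For the lower bound, suppose $(f,h_c)$ is an admissible tangential pair with constant $c$. Since $f(1)>1$ is finite and the companion Legendre solution $Q_n$ has logarithmic singularities at $\pm 1$, one must have $f=\lambda L_n$ with $\lambda>0$. At the tangency point $x$ the Wronskian $W_c(x):=L_n(x)h_c'(x)-L_n'(x)h_c(x)$ vanishes, and the Lagrange identity integrated from $x$ to $1$ gives $(1-x^2)W_c(x)=-\int_x^1 L_n Dh_c\,dt$, so $\int_x^1 L_n Dh_c=0$. Expanding $Dh_c$ this rearranges to $c^p D(x)=N(x)$ with $N(x):=\int_x^1 L_n(t)(1-t)(1+t)^{p-1}\,dt$ and $D(x):=\int_x^1 L_n(t)(1+t)(1-t)^{p-1}\,dt$. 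When $x\in[z_p,1)$ the weight in $D(x)$ is positive and $L_n>0$ there, so $D(x)>0$; moreover $N(x)-(c^*)^p D(x)=(2^p/p^2)\int_x^1 L_n Dh_{c^*}\,dt=-(2^p/p^2)\Psi(x)\ge 0$, yielding $c\ge c^*$. The case $x\in(s_0,z_p)$ is impossible because $L_n(x)<0$ just left of $z_p$ while $h_c(x)>0$ just right of $s_0$, so $\lambda L_n(x)=h_c(x)$ cannot hold with $\lambda>0$. The remaining subcases $x\le s_0$ are excluded by the constraint $f(s_0)\ge h_c(s_0)=0$, which forces $L_n(s_0)\ge 0$, followed by a sign chase along the zeros of $L_n$ and $h_c$ in $[x,1]$ using the interlacing structure.

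The principal technical obstacle is the sign analysis of $\Psi$, which simultaneously drives the upper-bound verification ($u\ge 0$, $\lambda>1$) and the main lower bound on $[z_p,1)$. The cleanness of $\Psi(z_p)=\Psi(1)=0$ depends on the very double vanishing $L_n(z_p)=h_{c^*}(z_p)=0$ that characterizes $c^*$, and this identity does not transplant to $c\ne c^*$. The subcases of the lower bound with tangency point $x<z_p$ require the most intricate book-keeping, turning on the interlacing of the zeros of $L_n$, the unique zero $s_0$ of $h_c$, and the inflection $i_p$.
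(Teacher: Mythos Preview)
Your approach via the Lagrange identity and the integral $\Psi(s)=\int_{z_p}^s L_n\,Dh_{c^*}\,dt$ is genuinely different from the paper's. The paper derives the closed-form expression $c^p=\beta(x)$ for the touching value and shows $\beta$ is strictly increasing on $[z_p,1)$ by direct differentiation (the computation reduces to $L_nL_n''>0$ there), together with a monotonicity lemma for the amplitude $a(x)$. Your route replaces this calculus with a single Sturm--Liouville Wronskian argument: the identity $\Psi(z_p)=\Psi(1)=0$ (which, as you correctly emphasise, hinges on the double vanishing $L_n(z_p)=h_{c^*}(z_p)=0$ that singles out $c^*$) and a one-sign-change argument force $\Psi<0$ on $(z_p,1)$, which simultaneously yields $\lambda L_n>h_{c^*}$ on $(z_p,1]$, $\lambda>1$, and the lower bound $c^p=N(x)/D(x)\ge(c^*)^p$ for tangency points $x\in[z_p,1)$. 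This is clean and arguably more conceptual than the paper's explicit $\beta'$ computation; it also avoids the separate proof that $L_n''>0$ on $[z_p,1)$.

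There is, however, a genuine gap in your lower bound for tangency points $x<z_p$. Your claim that ``$L_n(x)<0$ just left of $z_p$'' only covers $x$ in the interval between the two largest zeros of $L_n$, not all of $(s_0,z_p)$; and the ``sign chase along the zeros of $L_n$ and $h_c$ using the interlacing structure'' for $x\le s_0$ is not an argument. Fortunately the fix is a single line that replaces all of your case analysis below $z_p$: since $f=\lambda L_n$ with $\lambda>1$, one has $f(z_p)=0$; if $x<z_p$ then $z_p\in(x,1]$ and admissibility forces $0=f(z_p)>h_c(z_p)$, i.e.\ $\bigl(\tfrac{1+z_p}{2}\bigr)^p<c^p\bigl(\tfrac{1-z_p}{2}\bigr)^p$, which is exactly $c>c^*$. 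With this observation your proof is complete.
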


Consider the following situation: $L_{n,a}$ and $h_c$ touch at a certain point $x=x(p,a)$, the first function being above 
the second one on $(x(p,a),1]$. It is easy to obtain such a situation. Fix $a>1$ to have $L_{n,a}(1) >1$. 
Note that $h_c(1)=1$ for every $c$. Take sufficiently large $c$. Since $z_p$ is the first zero of $L_n$ 
(counting from $1$ to the left, $p=n(n+1)$), we have $L_{n,a} > h_c$ on $[z_p, 1]$. Start to decrease $c$. 
At a touching point $x=x(p, a), c=c(p,a)$ we have the equations 
$$
\begin{cases}(\frac{1+x}2)^p-c^p (\frac{1-x}2)^p= aL_n(x)\,,\\
\frac{p}2((\frac{1+x}2)^{p-1}+c^p (\frac{1-x}2)^{p-1}) = aL_n'(x)\,.\end{cases}
$$
 
Then
\begin{gather*} 
\Bigl(\frac{1+x}{2}\Bigr)^{p-1} =a\bigl(\frac{1}{p}(1-x)L_{n}'(x) + L_n(x)\bigr),\\
c^p\Bigl(\frac{1-x}{2}\Bigr)^{p-1} =a\bigl(\frac{1}{p}(1+x)L_{n}'(x) - L_n(x)\bigr)\,,
\end{gather*}
and we get
$$
c^p =\Bigl(\frac{1+x}{1-x}\Bigr)^{p-1} \frac{\frac1p(1+x) L_n'(x) -L_n(x)}{\frac1p(1-x) L_n'(x) +L_n(x)}\,.
$$

Let us consider the function
$$
\beta(x):=  \frac{(1+x)^p L_n'(x) -p(1+x)^{p-1}L_n(x)}{(1-x)^p L_n'(x) +p(1-x)^{p-1}L_n(x)}.
$$ 

\begin{lemma}
\label{zero}
The function $\beta$ is strictly increasing on the interval $[z_p, 1)$.
\end{lemma}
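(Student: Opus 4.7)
The plan is to show $\beta'(x)>0$ on $(z_p,1)$ by writing $\beta=A/B$, differentiating, and then using the Legendre equation to collapse the apparently long expression $A'B-AB'$ to a single clean product. Once this identity is in hand, the remaining positivity questions are settled by the classical interlacing of zeros of a polynomial and its derivatives.

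Set
$$
A(x)=(1+x)^{p-1}[(1+x)L_n'(x)-pL_n(x)],\qquad B(x)=(1-x)^{p-1}[(1-x)L_n'(x)+pL_n(x)].
$$
A direct calculation gives
$$
A'(x)=(1+x)^{p-2}[(1+x)^2L_n''-p(p-1)L_n],\qquad B'(x)=(1-x)^{p-2}[(1-x)^2L_n''-p(p-1)L_n].
$$
Introducing $u=1+x$, $v=1-x$ (so $u+v=2$, $u^2-v^2=4x$, $uv=1-x^2$) and expanding
$$
v[u^2L_n''-p(p-1)L_n][vL_n'+pL_n]-u[uL_n'-pL_n][v^2L_n''-p(p-1)L_n],
$$
the cross term $u^2v^2L_n'L_n''$ cancels and what remains equals
$$
2pL_n\bigl[(1-x^2)L_n''+2x(p-1)L_n'-p(p-1)L_n\bigr].
$$
Substituting $(1-x^2)L_n''=2xL_n'-pL_n$ from the Legendre equation $D_nL_n=0$, the bracket collapses to $p(2xL_n'-pL_n)=p(1-x^2)L_n''$. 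Restoring the outer powers yields the identity
$$
A'(x)B(x)-A(x)B'(x)=2p^2(1-x^2)^{p-1}L_n(x)L_n''(x).
$$

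The sign of $\beta'$ therefore coincides with that of $L_n L_n''$ on $\{B\neq 0\}$. Since $z_p$ is the largest zero of $L_n$ in $[-1,1]$ and $L_n(1)=1$, we have $L_n>0$ on $(z_p,1)$. By Rolle's theorem, zeros of $L_n'$ interlace those of $L_n$, so the largest zero of $L_n'$ lies strictly to the left of $z_p$; applying Rolle once more, the largest zero of $L_n''$ lies strictly to the left of the largest zero of $L_n'$. Thus $L_n''$ has no zero in $[z_p,1]$. Differentiating the Legendre equation once and evaluating at $x=1$ gives $-4L_n''(1)+(p-2)L_n'(1)=0$, and since $L_n'(1)=p/2$ this yields $L_n''(1)=p(p-2)/8>0$ (recall $p=n(n+1)>2$); by continuity $L_n''>0$ on $[z_p,1]$. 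The same interlacing argument, together with $L_n'(1)=p/2>0$, gives $L_n'>0$ on $[z_p,1]$, so $B(x)$ is a product of positive factors on $(z_p,1)$. Combining these observations yields $\beta'(x)>0$ on $(z_p,1)$, and $\beta$ is strictly increasing on $[z_p,1)$.

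The main obstacle is the algebraic simplification: the expansion of $A'B-AB'$ involves monomials of types $L_nL_n'$, $L_nL_n''$, $L_n^2$ and $L_n'L_n''$, and the collapse to the single term $L_n L_n''$ depends precisely on the combination furnished by the Legendre equation together with the elementary identities $u+v=2$ and $u^2-v^2=4x$. Once that clean identity is established, the positivity conclusions follow at once from the classical fact that $L_n$, as a polynomial of degree $n$ with $n$ simple real zeros in $(-1,1)$, has all derivatives $L_n^{(k)}$ with $n-k$ simple real zeros strictly interlacing those of $L_n^{(k-1)}$.
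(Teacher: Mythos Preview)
Your proof is correct and follows essentially the same approach as the paper: compute $\beta'$ via the quotient rule, use the Legendre equation to collapse the numerator to a constant multiple of $(1-x^2)^{p-1}L_n(x)L_n''(x)$, and then invoke interlacing of the zeros of $L_n$, $L_n'$, $L_n''$ to conclude positivity on $(z_p,1)$. Your presentation is slightly more explicit (the $u,v$ substitution, the direct computation $L_n''(1)=p(p-2)/8$, and the verification that $B>0$), but the argument is the same.
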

 
\begin{proof}
Let us differentiate $\beta$ and use the fact that $L_n$ satisfies the equation $(1-x^2) L_n''(x) -2x L_n'(x) +pL_n(x)=0$.
Then we get
\begin{gather} 
\frac{\beta'(x)((1-x)^p L_n'(x) +p(1-x)^{p-1}L_n(x))^2}{ 2p(1-x^2)^{p-2}}\notag\\
= L_n(x) ( (1-x^2) L_n''(x) + (p-1) \cdot 2x L_n'(x) -p(p-1) L_n(x))\notag\\
= p(1-x^2)\,L_n(x)\,L_n''(x)\,.
\end{gather} 
The only zero of $L_n$ on $[z_p,1)$ is $z_p$. The polynomial $L_n$ is positive on $(z_p,1)$. Let us show that  
\begin{equation}
\label{convzp}
L_n''(s)>0\,,\qquad s\in[z_p,1)\,.
\end{equation}  
The orthogonal polynomial $L_n$ has exactly $n$ zeros on $[-1,1]$ (it is a general property, but it also follows easily 
from formula \eqref{Ln}). Then $L'_n$ has exactly $n-1$ zeros on $[-1,1]$, and its largest zero is to the left of $z_p$. 
Furthermore, $L_n''$ has exactly $n-2$ zeros on $[-1,1]$, and its largest zero is also to the left of $z_p$. Since $L_n$ 
is zero at $z_p$ and one at $1$, we conclude that the sign of $L_n''$ on $[z_p,1)$ is positive, which proves \eqref{convzp}.
 
Thus, the function $\beta$ is strictly increasing on $[z_p,1)$. 
\end{proof}
 
We continue by defining
$$
a(x):=\frac{p}2\Bigl(\bigl(\frac{1+x}2\bigr)^{p-1}+\beta(x) \bigl(\frac{1-x}2\bigr)^{p-1}\Bigr)L_n'(x)^{-1}\,,\qquad x\in [z_p,1).
$$
Then $(x, a(x),\beta(x)^{1/p})$, $x\in (z_p,1)$, is a {\it touching triple}, in the sense that $a(x)L_n$ touches 
$h_{\beta(x)^{1/p}}$ at the point $x$. We have already checked that $\beta$ increases strictly on $[z_p,1)$ from 
$\bigl(\frac{1+z_p}{1-z_p}\bigr)^p$ to $+\infty$. Let us check now that $a(x)$ decreases strictly on $[z_p,1)$
from 
\begin{equation}
\label{nakrayu}
a(z_p)= h'_{\frac{1+z_p}{1-z_p}}(z_p)/L_n'(z_p)
\end{equation}
to $1$.
 

\begin{lemma}
\label{adecr}
The function $a(x)$ is strictly decreasing on $[z_p,1)$.
\end{lemma}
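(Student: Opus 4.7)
The plan is to differentiate the touching relations directly, obtaining a one-line formula for $a'(x)$ whose sign is controlled by Lemma~\ref{zero}. The two touching conditions that define $x \mapsto (a(x),\beta(x))$ read
\begin{align*}
a(x)\,L_n(x) &= \Bigl(\tfrac{1+x}{2}\Bigr)^p - \beta(x)\Bigl(\tfrac{1-x}{2}\Bigr)^p,\\
a(x)\,L_n'(x) &= \tfrac{p}{2}\Bigl(\bigl(\tfrac{1+x}{2}\bigr)^{p-1} + \beta(x)\bigl(\tfrac{1-x}{2}\bigr)^{p-1}\Bigr).
\end{align*}
Think of the right-hand side of the first line as $h_c(x)$ with $c^p = \beta(x)$; then the right-hand side of the second line is precisely $h_c'(x)$ for that same $c$.

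First I will differentiate the first equation in $x$, treating $\beta$ as a function of $x$. The derivative of the right-hand side with respect to $x$ splits into two pieces: the $x$-partial, which by the second touching equation is exactly $a(x)L_n'(x)$, and the $\beta$-partial, which contributes $-(\tfrac{1-x}{2})^p\beta'(x)$. On the left the derivative is $a'(x)L_n(x) + a(x)L_n'(x)$. Cancelling the common term $a(x)L_n'(x)$ yields the key identity
\[
a'(x)\,L_n(x) = -\Bigl(\tfrac{1-x}{2}\Bigr)^p \beta'(x), \qquad x\in(z_p,1).
\]

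Now I would read off the sign. For $x\in(z_p,1)$, the Legendre polynomial $L_n$ is strictly positive (since $z_p$ is its largest zero on $[-1,1]$ and $L_n(1)=1$), the factor $(\tfrac{1-x}{2})^p$ is strictly positive, and $\beta'(x)>0$ by Lemma~\ref{zero}. Hence $a'(x)<0$ on $(z_p,1)$, so $a$ is strictly decreasing there. To extend strict monotonicity to the closed endpoint $z_p$, I would note that the formula defining $a(x)$ uses $L_n'(z_p)\neq 0$ (a simple zero) and $(\tfrac{1+x}{2})^{p-1}$, $(\tfrac{1-x}{2})^{p-1}$, $\beta(x)$ are continuous up to $z_p$, so $a$ is continuous on $[z_p,1)$, and the strict inequality $a(x)<a(z_p)$ for $x\in(z_p,1)$ follows from $a'<0$ on $(z_p,1)$ together with this continuity. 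Computing the endpoint values $a(z_p)$ and $\lim_{x\uparrow 1}a(x)$ from the displayed formulas recovers the stated range $(1,a(z_p)]$, with $a(z_p)$ as in~\eqref{nakrayu}.

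The only place where anything could go wrong is the sign of $\beta'$, but that is exactly the content of Lemma~\ref{zero}; apart from that the argument is a one-step implicit differentiation, so I expect no real obstacle.
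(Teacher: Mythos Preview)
Your proof is correct and follows essentially the same route as the paper: differentiate the first touching relation, use the second to cancel the $a(x)L_n'(x)$ terms, and arrive at $a'(x)L_n(x)=-\bigl(\tfrac{1-x}{2}\bigr)^p\beta'(x)$, then read off the sign from Lemma~\ref{zero} and the positivity of $L_n$ on $(z_p,1)$. The paper additionally rewrites the right-hand side as $-A(x)L_n(x)L_n''(x)$ using the explicit formula for $\beta'$, but this is cosmetic; your version is equivalent.
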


\begin{proof}
At the touching point, we have
$$ 
a(x)L_n(x) = h_{\beta(x)^{1/p}}(x)\,.
$$
Differentiating both sides gives us 
$$
a'(x)L_n(x)+a(x)L_n'(x) = h_{\beta(x)^{1/p}}'(x)-\beta'(x)\Bigl(\frac{1-x}{2}\Bigr)^p\,.
$$
Since $h_{\beta(x)^{1/p}}'(x) = a(x)L_n'(x)$, we obtain
$$
a'(x)L_n(x)=-\beta'(x)\Bigl(\frac{1-x}{2}\Bigr)^p= -A(x) L_n(x)L_n''(x), 
$$
where $A(x)>0$.
Since $L_n>0$ in $(z_p,1)$ and $L_n''>0$ in $[z_p,1)$, the assertion of the lemma follows.
\end{proof}

Summing up, we have unique touching triples $(x,a(x),\beta(x)^{1/p})$ 
for $x\in [z_p,1)$ with 
$a(x)\to1$ and $\beta(x)\to\infty$ as $x\to1$. Let $(z_p, a(z_p), \beta(z_p)^{1/p})$ be the triple at $z_p$. Choose $c$ large enough 
so that $h_c<a(z_p) L_n$ on the whole interval $[z_p,1]$. Now decrease $c$ continuously till $h_c$ ($h_c$ increases)  
first meets $a(z_p)L_n$ at some point $x\in [z_p,1)$. 

If the meeting point $x$ is on $(z_p,1)$, it must be also a touching point. Then $a(x)=a(z_p)$. On the other hand,
$x>z_p$, and so by Lemma \ref{adecr} one has $a(x) <a(z_p)$. We came to a contradiction.
Thus, the meeting point is $z_p$. Next we verify that it is also a touching point. 
Indeed, $c$ is equal to $\frac{1+z_p}{1-z_p}$, therefore $h_c'(z_p) = a(z_p) L_n'(z_p)$ by \eqref{nakrayu}.
This means exactly that $h_c$ (with this $c$) not only meets $a(z_p)L_n$ at $z_p$ but also touches it at $z_p$.
Finally, $h_c$ stays below $a(z_p) L_n$ on the whole $(z_p,1)$: that is how we constructed that $c$.
This completes the proof of Theorem \ref{touching}.
\bigskip
 
Here is a different proof of Theorem \ref{touching}. Let us start with several remarks. First of all, 
let us fix $1<a<a(z_p)$ 
and starting with $c=\infty$ let us decrease $c$ until the first time $h_c(x)$ 
meets $L_{n,a}(x)$ on $[z_p,1)$ being below $L_{n,a}(x)$ on the whole interval $[z_p,1)$.
Since $a<a(z_p)$, the meeting points are on $(z_p,1)$, and hence are touching points.

The touching point on the interval $[z_p,1)$ (we call it $x(a)=x(n,a)$) is unique. Otherwise, suppose 
that for some $a>1$ and for some $c=c(a)$ we have two touching points on $[z_p, 1)$. Then 
by the previous considerations we have
\begin{equation*}
\beta(x_1)=\beta(x_2) =c^p\,,
\end{equation*}
which contradicts to Lemma \ref{zero}. We increase $a$ and get decreasing $c(a)$ 
and touching points $x(a)\in [z_p,1)$. How can this process end up? The first possibility is that for 
a certain $a_p>1$ we have $h_c$ lying below $L_{n, a_p}$ and touching it at $z_p$. Then automatically 
$c=c_p:= \frac{1+z_p}{1-z_p}$, and $a_p=a(z_p)$;
otherwise, the process may stop when $a_p=a(z_p)$.
Then again $c=c_p=\frac{1+z_p}{1-z_p}$.

One more proof of Theorem \ref{touching} can be obtained from the following fact which we establish for all $p>2$:  
 
\begin{lemma}
\label{separ}
The common tangent line $\ell_p$ separates the graphs of $L_{\alpha, a_p}$ and $h_{c_p}$ on $(z_p,1)$ if 
$p=\alpha(\alpha+1)>2$.
\end{lemma}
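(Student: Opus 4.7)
The plan is to verify the separation directly by controlling the two gaps $L_{\alpha,a_p}-\ell_p$ and $\ell_p-h_{c_p}$ on $[z_p,1]$. Since $L_\alpha(z_p)=0$ and the touching relations give $a_pL_\alpha'(z_p)=h_{c_p}'(z_p)$, the common tangent has the explicit form $\ell_p(s)=a_pL_\alpha'(z_p)(s-z_p)=h_{c_p}'(z_p)(s-z_p)$. The lemma then splits into two claims: (i) $L_{\alpha,a_p}\geq\ell_p$ on $[z_p,1]$, which will follow from convexity of $L_\alpha$ on that interval; and (ii) $h_{c_p}\leq\ell_p$ on $[z_p,1]$, which is delicate because $h_{c_p}$ has an inflection point $i_p\in(z_p,1)$ (since $c_p^{p/(p-2)}>c_p=(1+z_p)/(1-z_p)$ for $p>2$), so $h_{c_p}$ is concave on $[z_p,i_p]$ but convex on $[i_p,1]$.

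For (i), differentiating the Legendre equation once yields a second-order ODE for $L_\alpha'$ that rearranges into the divergence identity
\[
\bigl((1-s^2)^2 L_\alpha''(s)\bigr)'=-(p-2)(1-s^2)L_\alpha'(s).
\]
On $(z_p,1)$ one has $L_\alpha'>0$, which follows from $((1-s^2)L_\alpha')'=-pL_\alpha<0$ combined with $L_\alpha(z_p)=0$ and $(1-s^2)L_\alpha'\to 0$ at $s=1$. Evaluating the equation and its first derivative at $s=1$ yields the finite values $L_\alpha'(1)=p/2$ and $L_\alpha''(1)=p(p-2)/8$, so $(1-s^2)^2 L_\alpha''\to 0$ at $s=1$. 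The divergence identity then forces $(1-s^2)^2 L_\alpha''$ to be strictly decreasing on $(z_p,1)$ and hence strictly positive, so $L_\alpha''>0$ there; thus $L_\alpha$ lies strictly above its tangent at $z_p$, and (i) follows.

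For (ii), let $H=\ell_p-h_{c_p}$, so $H(z_p)=H'(z_p)=0$ and $H''=-h_{c_p}''$. On the concave subinterval $(z_p,i_p)$ one has $H''>0$, so $H$ and $H'$ are strictly positive; in particular $H(i_p)>0$ and $H'(i_p)>0$. On the convex subinterval $(i_p,1)$, $H$ is concave, and it is enough to prove $H(1)\geq 0$, i.e.\ $\ell_p(1)\geq h_{c_p}(1)=1$. Using $c_p=(1+z_p)/(1-z_p)$ to simplify, a direct calculation of $h_{c_p}'(z_p)$ yields
\[
\ell_p(1)=h_{c_p}'(z_p)(1-z_p)=\frac{p(1+z_p)^{p-1}}{2^{p-1}},
\]
so the reduced inequality is $p(1+z_p)^{p-1}\geq 2^{p-1}$. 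This is the crux and the main obstacle of the proof.

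I would handle the obstacle in two elementary steps. First, integrating $((1-s^2)L_\alpha')'=-pL_\alpha$ over $[z_p,1]$ gives $(1-z_p^2)L_\alpha'(z_p)=p\int_{z_p}^1 L_\alpha\,ds$, and the tangent bound $L_\alpha(s)\geq L_\alpha'(z_p)(s-z_p)$ from (i) yields $\int_{z_p}^1 L_\alpha\geq L_\alpha'(z_p)(1-z_p)^2/2$. Cancelling $L_\alpha'(z_p)>0$ produces the clean lower bound $z_p\geq (p-2)/(p+2)$, equivalently $1+z_p\geq 2p/(p+2)$, after which the crux reduces to the elementary inequality $p^p\geq(p+2)^{p-1}$ for $p\geq 2$. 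Taking logarithms turns this into the nonnegativity of $p\mapsto p\log p-(p-1)\log(p+2)$ on $[2,\infty)$ (vanishing at $p=2$); the substitution $u=2/p$ rewrites its derivative as $3u/[2(1+u)]-\log(1+u)$, whose sign is settled on $(0,1]$ by inspecting its single critical point at $u=1/2$ together with the endpoint value $3/4-\log 2>0$. Combining these steps gives $\ell_p(1)\geq 1$, which with the earlier concavity argument finishes (ii) and hence the lemma.
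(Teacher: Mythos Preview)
Your argument is correct and follows essentially the same route as the paper: split into $L_{\alpha,a_p}\ge\ell_p$ via convexity of $L_\alpha$ on $[z_p,1]$, and $h_{c_p}\le\ell_p$ via concavity on $(z_p,i_p)$ together with the endpoint inequality $\ell_p(1)\ge 1$, which is reduced (through the same integration-and-tangent bound giving $z_p\ge (p-2)/(p+2)$) to $p^p\ge (p+2)^{p-1}$. The only notable difference is that you prove the convexity $L_\alpha''>0$ on $(z_p,1)$ directly from the divergence identity $\bigl((1-s^2)^2L_\alpha''\bigr)'=-(p-2)(1-s^2)L_\alpha'$, whereas the paper obtains it by a contradiction argument (Lemma~\ref{convf1}); your version is a bit slicker but otherwise the proofs coincide.
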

 
\begin{proof}
It is easy to calculate the slope of $\ell_p$:
$$
h_{c_p}'(z_p) = \frac{p}2 \Bigl(\frac{1+z_p}{2}\Bigr)^{p-1} \Bigl(1 + \frac{1+z_p}{1-z_p}\Bigr)=
p\Bigl(\frac{1+z_p}{2}\Bigr)^{p-1}\frac{1}{1-z_p}\,.
$$
Then 
$$
\ell_p(1) = p\Bigl(\frac{1+z_p}{2}\Bigr)^{p-1}\,.
$$
Later on we prove that
\begin{equation}
\label{ellp}
p\Bigl(\frac{1+z_p}{2}\Bigr)^{p-1}\ge 1\,,\qquad p\ge 2\,.
\end{equation}

In Lemma~\ref{convf1} below we prove that $L_\alpha$ is convex on $[z_p,1]$; hence, $L_{\alpha, a_p}$ lies above $\ell_p$. 
The function $h_{c_p}$ is concave on $(z_p, i_p)$ (recall that $i_p$ denote the unique inflection point of $h_{c_p}$). 
So $h_{c_p}<\ell_p$ on $(z_p, i_p]$. In particular, $h_{c_p}(i_p)< \ell_p(i_p)$. On $[i_p, 1]$ the function $h_{c_p}$ is convex. 
We have seen that at the left end of this interval it is lower than the line $\ell_p$. Inequality \eqref{ellp} shows that the 
same happens at the right end point $1$, because $h_{c_p}(1)=1$. This completes the proof of our lemma modulo  \eqref{ellp}.
   
To verify \eqref{ellp} we first establish
 
\begin{lemma}
\label{zpestimate}
$ \dfrac{1+z_p}{2} \ge \dfrac{p}{p+2}$, $p\ge 2$.
\end{lemma}
 
\begin{proof}
We have
$$
pL_\alpha(x) = -((1-x^2) L_\alpha'(x))'\,.
$$
We integrate this equality from $z_p$ to $1$:
$$
(1-z_p^2) L_\alpha'(z_p) = p\int_{z_p}^1 L_\alpha(x)\,dx\,.
$$
Since $L_\alpha$ is convex on $[z_p,1]$, the integrand 
is bigger than $L_\alpha'(z_p)(x-z_p)$. Therefore,
$$
(1-z_p^2) L_\alpha'(z_p)\ge \frac p2\, L_\alpha'(z_p) (1-z_p)^2\,.
$$
This is equivalent to
$$
\frac{1+z_p}{1-z_p} \ge \frac p2.
$$
which proves our assertion.
\end{proof}
 
To prove \eqref{ellp} and to finish the proof of Lemma \ref{separ} it remains to mention that 
$$
\Bigl(\frac{p}{p+2}\Bigr)^{p-1} \ge \frac1{p}\,,\qquad p\ge 2, 
$$
or $p^p\ge (p+2)^{p-1}$.  This elementary inequality is true for $p\ge 2$ with equality only for $p=2$.
\end{proof}
 
Thus, our best meeting point is $z_p$, and our best $c_p$ is
$$
\frac{1+z_p}{1-z_p}\,.
$$
As $x(p,a)=z_p$ and $c_p$ are already defined, we determine $a=a_p$ from  equalities above:
\begin{equation*}
a_p= \frac{p}2\biggl(\Bigl(\frac{1+z_p}{2}\Bigr)^{p-1} +\Bigl(\frac{1+z_p}{1-z_p}\Bigr)^p
\Bigl(\frac{1-z_p}{2}\Bigr)^{p-1}\biggr)\cdot L_{n}'(z_p)^{-1}\,.
\end{equation*}
 
\bigskip

\subsection{A candidate for the solution of \eqref{minus1}, \eqref{plus1}, \eqref{middle}}
We consider the function (recall that $p=n(n+1)$)
$$
g_p(s) :=\begin{cases} L_{n,a_p} (s), \,\, s\in [z_p, 1]\\ h_{c_p} (s), \,\, s\in [-1, z_p]\,.\end{cases}
$$
It is $C^1$-smooth, and it satisfies \eqref{minus1}: $Dg_p\le 0$. In fact, it actually satisfies
the equality $Dg_p=0$ on the interval $(z_p,1]$. On the interval $[-1, z_p]$, $g_p=h_{c_p}$ (recall that 
$h_{c_p}$ has an inflection point $i_p$ such that $c_p^p = \bigl(\frac{1+i_p}{1-i_p}\bigr)^{p-2}$ located 
to the right of $z_p$ satisfying the property $c_p^p = \bigl(\frac{1+z_p}{1-z_p}\bigr)^{p}$). Recall also that 
at the beginning of this section we checked that the inflection point of $h_c$ coincides with the point 
where $Dh_c$ changes the sign from negative to positive. Therefore, to the left of $i_p$  we have 
$Dh_{c_p}\le 0$, which implies that  to the left of $z_p$ we have  $Dg_p= Dh_{c_p}\le 0$.
 
Now let us check \eqref{plus1}, \eqref{middle}. Recall that $\tilde{D}g=\frac{Dg}{1-s^2} +Kg$. 
Therefore, to check \eqref{plus1} (i.e. $\tilde{D} g_p(s) \le 0$), it is sufficient to prove that
$$
-Kg_p(s) = (1-s^2) g_p''(s) + (2p-2) s g_p'(s) -(p^2-p) g_p(s)\ge 0\,.
$$
The function $g_p$ coincides with $h_{c_p}$ on $[-1,z_p]$. Note that
$$
Kh_c(s)=0
$$
identically on $[-1,1]$ for any $c$. So we need only to check the inequality $-KL_n(s)\ge 0$ on $[z_p,1]$.
We are to verify that 
$$
(1-s^2) L_n''(s) + (2p-2) s L_n'(s) -(p^2-p) L_n(s)\ge 0\,,
$$
and we have
\begin{equation}
\label{Ln1}
DL_n(s) = (1-s^2) L_n''(s) -2 s L_n'(s) +p L_n(s)= 0\,.
\end{equation}
Therefore, it suffices to check that
$$
p\bigl(2s L_n'(s) - pL_n(s)\bigr)\ge 0\,,\qquad s\in[z_p,1]\,.
$$
Using \eqref{Ln1} once more, we see that this follows from \eqref{convzp}.

\begin{lemma}
\label{middlelemma}
For $p=n(n+1)$, inequality \eqref{middle} is satisfied for the function $\phi(x,y) = (x+y)^p g_p(\frac{y-x}{x+y})$ 
such that $g_p= L_{n, a_p}$ to the right of $z_p$ and $g_p=h_{c_p}$ to the left of $z_p$, where 
$c_p=\frac{1+z_p}{1-z_p}$.
\end{lemma}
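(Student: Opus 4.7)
The plan is to split the verification of \eqref{middle} into the two open sub-intervals on which $g_p$ is smooth, and to dispose of the seam $s=z_p$ as a separate observation.

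First I would handle the right piece $s\in(z_p,1)$, where $g_p=L_{n,a_p}$ satisfies the Legendre equation $Dg_p=0$. This is the easy case, because the computation already carried out in Section~\ref{DI} under the hypothesis $Dg=0$ gives
$$\phi_{xy}=p(pg_p-2sg_p'),\quad \phi_{xx}+\tfrac{\phi_x}{x}=p(pg_p-2(1+s)g_p'),\quad \phi_{yy}+\tfrac{\phi_y}{y}=p(pg_p+2(1-s)g_p'),$$
and hence $\D=4p^2(g_p')^2\ge 0$. Inequality \eqref{middle} then reduces to \eqref{BDA}, which is the reverse triangle inequality $||u|-|v||\le|u-v|$ applied with $u=pg_p-2sg_p'$ and $v=2g_p'$ (so that $u-v=pg_p-2(1+s)g_p'$); hence \eqref{middle} is automatic on this piece.

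Next I would handle the left piece $s\in(-1,z_p)$, where $g_p=h_{c_p}$. Using $(x+y)\frac{1+s}{2}=y$ and $(x+y)\frac{1-s}{2}=x$, the corresponding $\phi$ is simply $\phi(x,y)=y^p-c_p^p x^p$, and a direct computation yields
$$\phi_{xy}=0,\quad \phi_{xx}+\tfrac{\phi_x}{x}=-p^2c_p^p x^{p-2},\quad \phi_{yy}+\tfrac{\phi_y}{y}=p^2 y^{p-2},\quad \D=p^4c_p^p(xy)^{p-2}>0.$$
Substituting into \eqref{middle} and simplifying, the inequality reduces to the single pointwise condition $(y/x)^{(p-2)/2}\le c_p^{p/2}$, equivalently $y/x\le c_p^{p/(p-2)}$. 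The defining constraint of this region, $s\le z_p$, is equivalent to $y/x\le(1+z_p)/(1-z_p)=c_p$; since $c_p>1$ and $p=n(n+1)>2$, we have $p/(p-2)>1$, so $c_p^{p/(p-2)}\ge c_p\ge y/x$, and \eqref{middle} holds throughout the region.

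Finally, on the seam $s=z_p$ (the line $y=c_p x$) the function $g_p$ is $C^1$ by the very construction of the touching triple $(z_p,a_p,c_p)$, so the second partials of $\phi$ are bounded with at most a jump across this line and contribute no singular (delta) mass. The two pointwise verifications above thus deliver \eqref{middle} in the required distributional sense, and the lemma is proved. I do not expect any substantial obstacle; the real content is the observation that the Legendre equation collapses \eqref{middle} to the reverse triangle inequality on the right piece, while $\phi_{xy}\equiv 0$ on the left piece reduces \eqref{middle} to a single bound on $y/x$ that is strictly weaker than the domain constraint $y/x\le c_p$.
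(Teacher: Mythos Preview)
Your argument is correct and follows essentially the same route as the paper: you invoke \eqref{BDA} on the Legendre piece and, on the left piece, compute $\phi=y^p-c_p^p x^p$ explicitly to reduce \eqref{middle} to $(y/x)^{p-2}\le c_p^p$, which holds because $y/x\le c_p$ and $c_p>1$. The only addition is your explicit remark on the $C^1$ seam, which the paper leaves implicit.
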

 
\begin{proof} If $s=\frac{y-x}{x+y}$ belongs to $(z_p,1]$, then $g_p=L_{n,a_p}$, and \eqref{middle} follows, see \eqref{BDA}.
To check \eqref{middle} for others $s$ is easy. Indeed, here $g_p= h_{c_p}$. For 
$\psi(x,y):= (x+y)^p ((y/(x+y))^p-c_p^p(x/(x+y))^p)= y^p-c_p^p x^p$ we have
$\psi_{xy}=0$, $ \psi_x/x +\psi_{xx} = - c_p^p p^2 x^{p-2}$, $ \psi_y/y +\psi_{yy} =  p^2 y^{p-2}$. 
Inequality \eqref{middle} for $\psi$ can be written as
$$
\bigl|(\psi_x/x +\psi_{xx})(\psi_y/y +\psi_{yy})\bigr|^{1/2} \le |\psi_x/x +\psi_{xx}|\,,
$$
or, equivalently,
\begin{equation}
y^{p-2} \le c_p^p x^{p-2}\,.
\label{x04}
\end{equation}
If $s<z_p$, then
$$
(1+s)^{p-2}\le c_p^p(1-s)^{p-2},
$$
and \eqref{x04} follows by \eqref{x03}.
\end{proof}

\begin{remark} \label{sg6}
We have proved \eqref{middle} for $p=n(n+1)$. However, our argument extends to all $p>2$.
\end{remark}

All the inequalities \eqref{minus}, \eqref{plus}, \eqref{middle} are now proved for $p=n(n+1)$. This shows that the constant in the orthogonal martingale estimate for such $p$ satisfies the inequality
\begin{equation*}
c_p\le \frac{1+z_p}{1-z_p}\,.
\end{equation*}

\section{Legendre equation. General facts}
\label{general}

Here we list some useful facts from \cite{C}.

Consider the equation 
$$
P(x) y'' +Q(x)y' +R(x) y=0
$$
near the point $x=x_0$, where $P, Q, R$ are analytic functions and
$$
(x-x_0)\frac{Q(x)}{P(x)}\,,\,\, (x-x_0)^2\frac{R(x)}{P(x)}
$$
are analytic functions in a neighborhood of $x_0$ (the regular singular point case). Suppose that the 
{\it indicial} polynomial
$$
r(r-1) + a_0 r +b_0 \,,
$$
where
$$
a_0 =\lim_{x\rightarrow x_0}(x-x_0)\frac{Q(x)}{P(x)}\,,\,\, b_0=\lim_{x\rightarrow x_0}(x-x_0)^2\frac{R(x)}{P(x)}
$$
has a double root $r=r_1$. Then our equation has two linearly independent solutions 
represented in a small half-neighborhood $(x_0,x_0+\varepsilon)$ by the formulas
\begin{align*}
f_1(x) &= (x-x_0)^{r_1} \sum_{n=0}^{\infty} a_n (x-x_0)^n\,,\\
f_2(x) &= f_1(x) \log\frac1{x-x_0} + (x-x_0)^{r_1}\sum_{n=1}^{\infty} b_n (x-x_0)^n\,;
\end{align*}
the series converge absolutely for $x\in(x_0-\varepsilon,x_0+\varepsilon)$, $a_0\not=0$.
 
In the case of the Legendre equation,
\begin{equation}
\label{Leg}
(1-x^2) y'' -2x y' +py=0,
\end{equation}
we will use these notations for a bounded and an unbounded solutions near $x_0=1$:
\begin{multline*}
f_1(x) = 1+ \frac{p}{2\cdot 1^2} (x-1) - \frac{p(1\cdot 2 -p)}{(2\cdot 1^2)(2\cdot 2^2)}(x-1)^2+\dots\\
+(-1)^{n+1}\frac{p(1\cdot 2 -p)\dots (n(n-1) -p)}{2^n (n!)^2} (x-1)^n +\dots\,.
\end{multline*}
For integer $\alpha$ this is a Legendre polynomial.
Furthermore, in a half-neighbor\-hood $(1-\varepsilon,1)$ we have 
\begin{equation*}
f_2(x) =f_1(x)\log\frac1{1-x} + H(x)\,,
\end{equation*}
where $H$ is real analytic in a neighborhood of $1$; $f_1$ and $f_2$ are real analytic on $(-1,1)$.

\section{The case $p>2$}
\label{pgr2}

To extend our solution of the main problem from the case $p=n(n+1)$ to the general case $p>2$, 
we need to prove a couple of lemmas. Denote by $z_p$ the rightmost zero of $f_1$ on the interval $[-1,1]$. 
We are going to prove two things: (1) for every solution of Legendre equation \eqref{Leg}, its rightmost zero 
on the interval $[-1,1]$ is at least $z_p$; (2) the solution $f_1$ is strictly convex on $[z_p, 1]$.
 
We need also Remark~\ref{sg6} to complete the reasoning for all $p>2$.
 
\begin{lemma}
\label{minzero}
For every solution of Legendre equation \eqref{Leg}, its rightmost zero on the interval $[-1,1]$ 
is at least $z_p$.
\end{lemma}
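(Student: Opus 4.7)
The plan is to derive the assertion from a monotonicity argument for the quotient $y/f_1$, via Abel's formula for the Wronskian. Write a nontrivial solution as $y = af_1 + bf_2$. If $b = 0$, then $y$ is a scalar multiple of $f_1$ and its rightmost zero in $[-1,1]$ is $z_p$; if $b \neq 0$ but $y(z_p) = 0$, then $z_p$ is itself a zero of $y$ and we are again done. The remaining case is $b \neq 0$ and $y(z_p) \neq 0$, where I aim to produce a zero of $y$ strictly inside $(z_p,1)$.

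On $(z_p, 1]$ the function $f_1$ is positive, since $f_1(1) = 1$ and $z_p$ is the rightmost zero of $f_1$ on $[-1,1]$. Put $\rho := y/f_1$ on $(z_p, 1)$. Since $y$ and $f_1$ are linearly independent solutions of the Legendre equation \eqref{Leg}, Abel's formula applied to the equation in the form $y'' - \tfrac{2x}{1-x^2}y' + \tfrac{p}{1-x^2}y = 0$ yields $W(f_1, y)(x) = C/(1 - x^2)$ with $C \neq 0$, so
\[
\rho'(x) = \frac{W(f_1, y)(x)}{f_1(x)^2} = \frac{C}{(1-x^2)\,f_1(x)^2}
\]
has constant sign on $(z_p, 1)$, and $\rho$ is strictly monotonic there. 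As $x \to z_p^+$ we have $f_1(x) \to 0^+$ and $y(z_p) \neq 0$, so $\rho(x) \to \pm\infty$; as $x \to 1^-$, $f_1(x) \to 1$ while $bf_2$ carries the logarithmic blow-up recorded in Section~\ref{general}, so again $\rho(x) \to \pm\infty$. A strictly monotonic function on an open interval cannot tend to $+\infty$ at both endpoints nor to $-\infty$ at both endpoints, hence the two infinite limits have opposite signs, and by continuity $\rho$ vanishes at some $w \in (z_p, 1)$. At this $w$ one has $y(w) = 0$, giving a zero of $y$ strictly to the right of $z_p$.

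I do not expect a serious obstacle: the argument is really a form of Sturm separation adapted to the regular singular endpoint $x=1$, and all the ingredients (linear independence forcing $C \neq 0$, positivity of $f_1$ on $(z_p, 1]$, and the logarithmic behaviour of $f_2$ at $1$) are either trivial or stated in Section~\ref{general}. The only point to be careful about is that $y(z_p) \neq 0$ and $b \neq 0$ really do force $\rho$ to blow up at both endpoints with the correct signs; this is simply the observation that in a sum $af_1(x) + bf_2(x)$ the $f_2$-term dominates near $x=1$ whenever $b \neq 0$, and that $f_1'(z_p) \neq 0$ by uniqueness of the Cauchy problem for \eqref{Leg}.
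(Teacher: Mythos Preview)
Your proof is correct and follows essentially the same approach as the paper: both arguments rest on Abel's formula for the Wronskian, the positivity of $f_1$ on $(z_p,1]$ with $f_1'(z_p)\neq 0$, and the logarithmic blow-up of $f_2$ at $x=1$. The only cosmetic difference is packaging: the paper computes $W(z_p)=-f_1'(z_p)f_2(z_p)$ directly to pin down $f_2(z_p)<0$ and then applies the intermediate value theorem to $c_1f_1+c_2f_2$, whereas you encode the same information in the strict monotonicity of $\rho=y/f_1$ and its blow-up at both endpoints; your extra case $y(z_p)=0$ is harmless but unnecessary, since the paper's sign computation already covers it.
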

 
\begin{proof}
Note that $f_2(1)=+\infty$. Consider the Wronskian $W(x)= f_2'(x) f_1(x) -f_1'(x) f_2(x)$. 
Section~\ref{general} gives us that $W(x)\asymp \frac{1}{1-x}$, $x<1$. So $W$ is positive 
near $x=1$. We know that
$$
W'(x) = \frac{2x}{1-x^2} W(x)\,,
$$
and, hence, $W$ preserves the sign. Consider
$$
W(z_p)= f_2'(z_p) f_1(z_p) -f_1'(z_p) f_2(z_p)=-f_1'(z_p) f_2(z_p)\,.
$$
The function $f_1$ is positive on $[z_p,1]$, and it changes sign at $z_p$, so $f_1'(z_p)\ge 0$. 
If $f_1'(z_p)=0$, then by the Legendre equation, $f_1''(z_p)=0$, and differentiating the Legendre equation, 
we get $f_1^{(n)}(z_p)=0$ for all $n$. This is impossible as the analytic function $f_1$ would then vanish identically. Hence, $f_1'(z_p) >0$. We conclude that
$$
f_2(z_p)<0\,.
$$
Consider a linear combination $f_3= c_1 f_1 + c_2 f_2$ with a positive $c_2$. Then $f_3(z_p)<0$ by 
what we have just proved. Since $f_3(1)=+\infty$, $f_3$ must have a zero on $(z_p,1)$. For negative 
$c_2$ we have the same conclusion.
\end{proof}
 
\begin{lemma}
\label{convf1}
The function $f_1$ is strictly convex on $[z_p,1)$ for $p>2$.
\end{lemma}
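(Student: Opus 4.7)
The plan is to show $f_1''(x) > 0$ on $(z_p, 1)$ via a contradiction argument; strict convexity on $[z_p, 1)$ then follows from strict monotonicity of $f_1'$, while at the left endpoint the Legendre equation combined with $f_1(z_p) = 0$ gives $f_1''(z_p) = 2 z_p f_1'(z_p)/(1-z_p^2)$, which is nonnegative by continuity and, in view of $f_1'(z_p) > 0$ established in the proof of Lemma~\ref{minzero}, forces $z_p \ge 0$.

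The first step uses the series expansion at $x = 1$ from Section~\ref{general}:
\[
f_1(x) = 1 + \frac{p}{2}(x-1) + \frac{p(p-2)}{16}(x-1)^2 + O\bigl((x-1)^3\bigr),
\]
so $f_1''(1^-) = p(p-2)/8 > 0$ for $p > 2$, and hence $f_1'' > 0$ in a left neighborhood of $1$. Suppose for contradiction that $f_1''$ vanishes somewhere in $(z_p, 1)$, and let $x_1$ be the rightmost such zero. By construction $f_1'' > 0$ on $(x_1, 1)$ and $f_1'''(x_1) \ge 0$. Two identities will be the main tools. Evaluating \eqref{Leg} at $x_1$ (using $f_1''(x_1) = 0$) gives
\[
2 x_1 f_1'(x_1) = p\, f_1(x_1),
\]
with $f_1(x_1) > 0$ since $x_1 > z_p$ and $z_p$ is the largest zero of $f_1$ on $[-1,1]$ by Lemma~\ref{minzero}. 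Differentiating \eqref{Leg} gives
\[
(1-x^2) f_1'''(x) - 4 x f_1''(x) + (p-2) f_1'(x) = 0,
\]
so that $(1-x_1^2) f_1'''(x_1) = -(p-2) f_1'(x_1)$.

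I would then split into cases by the sign of $x_1$. When $x_1 > 0$ the first identity yields $f_1'(x_1) > 0$, whence the second gives $f_1'''(x_1) < 0$, contradicting $f_1'''(x_1) \ge 0$. When $x_1 = 0$ the first identity forces $f_1(0) = 0$, contradicting positivity of $f_1$ on $(z_p, 1)$.

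The main obstacle is the case $x_1 < 0$: here $f_1'(x_1) < 0$, and the third-derivative sign test becomes consistent rather than contradictory, so I would dispose of this case indirectly. Since $f_1'' > 0$ on $(x_1, 1)$, $f_1'$ is strictly increasing on that interval, and as $f_1'(x_1) < 0$ while $f_1'(1) = p/2 > 0$, there must exist some $x_2 \in (x_1, 1)$ with $f_1'(x_2) = 0$. Plugging into \eqref{Leg} once more yields $(1-x_2^2) f_1''(x_2) = -p\, f_1(x_2) < 0$, contradicting $f_1'' > 0$ on $(x_1, 1)$. Ruling out all three cases gives $f_1'' > 0$ on $(z_p, 1)$ as required.
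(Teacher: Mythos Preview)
Your proof is correct and runs parallel to the paper's, using the same two ingredients --- the boundary value $f_1''(1)=p(p-2)/8>0$ and the differentiated Legendre equation $(1-x^2)f_1'''-4xf_1''+(p-2)f_1'=0$ --- but organizes the contradiction differently. The paper starts from the left: it uses $f_1''(z_p)>0$ to take the \emph{leftmost} zero $x_1$ of $f_1''$, then tracks a second zero $x_3$ and an intermediate critical point $x_2$ of $f_1''$, finally arguing that $f_1'$ and $f_1''$ remain negative up to $1$. You start from the right: taking the \emph{rightmost} zero $x_1$ gives $f_1'''(x_1)\ge 0$ immediately, and your case split on the sign of $x_1$ is dispatched in one step each.

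The gain in your version is that it is self-contained with respect to the sign of $z_p$. The paper's opening move, ``by the first equation above we get $f_1''(z_p)>0$'', reads $(1-z_p^2)f_1''(z_p)=2z_p f_1'(z_p)$ and thus silently uses $z_p>0$; this is true (it follows, for instance, from the monotonicity of $z_p$ proved later in Section~\ref{Asymptotics}) but is not available at this point in the text. Your argument not only avoids that assumption but recovers $z_p\ge 0$ as a corollary, since $f_1''>0$ on $(z_p,1)$ forces $f_1''(z_p)\ge 0$ and hence $z_p\ge 0$ by the same formula. The paper's route, on the other hand, is perhaps more ``dynamical'' in flavor and shows explicitly how convexity, once lost, cannot be regained.
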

 
\begin{proof}
We have
\begin{gather*} 
(1-x^2) f_1'' -2x f_1' +pf_1=0\,,\\
(1-x^2) f_1''' -4x f_1'' +(p-2)f_1'=0\,.
\end{gather*} 
Then
$$
f_1'(1) =\frac{p}2\,,\,\, f_1''(1) =\frac{p(p-2)}{8}\,.
$$
We have already observed that $f_1'(z_p)>0$. By the first equation above we get $f_1''(z_p)>0$ .
 
Let $x_1\in (z_p,1)$ be the first point where $f_1''(x_1) =0$. Then obviously $f_1'(x_1)>0$, and by 
the second equation above $f_1'''(x_1)<0$. So $f_1$ does change convexity to concavity passing 
through $x_1$. Furthermore, we have just seen that $f_1''(1) =\frac{p(p-2)}{8}>0$. So $f_1$ should 
change from concavity to convexity again, say at $x_3 \in (x_1, 1)$. Let $x_3$ be the closest to 
$x_1$ point with this property, so that $f_1''<0$ in between. Then there exists $x_2\in (x_1, x_3)$ 
such that $f_1'''(x_2) =0$, $ f_1''(x_2) <0$. Plug this to the second equation at the beginning 
of the proof and note that then
$$
f_1'(x_2) <0\,.
$$
Therefore, using again the Legendre equation we see that $f_1', f_1''$ will stay negative till the point $1$. This is impossible because they are  strictly positive at $1$.
\end{proof}
 
Let us observe that Remark~\ref{sg6} and Lemmas~\ref{minzero}, \ref{convf1} are the only ingredients we need to carry through the reasoning for general $p>2$. We just repeat the reasoning we used for the case $p=n(n+1)$ replacing the Legendre polynomials $L_n$ by $f_1$. We finally get the proof that for $p>2$ the best constant for the martingale transform of orthogonal martingales satisfies the inequality
\begin{equation*}
c_p\le \frac{1+z_p}{1-z_p}\,,
\end{equation*}
where $z_p$ is the largest zero of the solution $f_1$ of Legendre equation \eqref{Leg}: $(1-x^2)y'' -2x y'+py=0$ 
on the interval $[-1,1]$. We also proved that $z_p=\min_y\max \{z(y)\}$, where $z(y)$ denotes any zero $z$ 
of any nontrivial solution $y$ of \eqref{Leg} on the interval $[-1,1]$.

\section{Sharpness}
\label{sh}
 
We use the spherical coordinates 
$$ 
x=r\sin(\theta)\cos(\phi), y=r\sin(\theta)\sin(\phi), z=r\cos(\theta).
$$
The $\R^3$ Laplacian in spherical coordinates is 
$$
\Delta\psi=\frac{\partial_r(r^2\partial_r\psi)}{r^2}+
\frac{\partial_\theta(\sin(\theta)\partial_\theta\psi)}{r^2\sin\theta}+\frac{\partial^2_\phi\psi}{r^2\sin^2\theta}.
$$
Changing the variable $s=\cos\theta$, we define the obstacle function
$$ 
v(\theta)=\Bigl(\frac{1+\cos\theta}{2}\Bigr)^{\alpha(\alpha+1)}
-c^{\alpha(\alpha+1)}\Bigl(\frac{1-\cos\theta}{2}\Bigr)^{\alpha(\alpha+1)}.
$$
We associate to $v$ an auxiliary obstacle function in $\R^3$,
$$
V(r,\theta,\phi)=r^\alpha v(\theta)\,.
$$
Observe that the function $V$ has separated variables and azimuthal symmetry (i.e. no dependence on $\phi$). 

Consider now the {\it minimal} superharmonic function $\psi$, $\psi\ge V$. 
Then $\psi$ 
has the same symmetries as $V$ does. This is a consequence of taking infimum over 
the superharmonic majorants of the form $\psi(ax+by,-bx+ay,z)$ and 
$\frac{1}{\lambda^\alpha}\psi(\lambda x,\lambda y, \lambda z)$; this infimum which has both homogeneity 
and rotational invariance is again superharmonic. Thus the minimal superharmonic majorant 
$\psi(r,\theta,\phi)\geq V$ has the form
$$ 
\psi(r,\theta,\phi) = R(r)\Theta(\theta)\Phi(\phi) = r^\alpha \Theta(\theta).
$$
Applying the spherical Laplacian and using the azimuthal symmetry, we obtain (at least in the sense of distributions) that
$$
r^2\sin(\theta)\Delta\psi 
= \alpha(\alpha+1)r^{\alpha}\Theta(\theta)\sin\theta+r^{\alpha}(\Theta'(\theta)\sin\theta)'.
$$
Dividing by $r^\alpha\sin\theta$, we get
$$ 
\frac{\Delta\psi}{r^{\alpha-2}} = \frac{(\Theta'(\theta)\sin\theta)'}{\sin\theta}+\alpha(\alpha+1)\Theta(\theta).
$$
Here $\theta\in (0,\pi)$. The right hand side is the trigonometric form of the Legendre operator. The usual form 
$((1-s^2)y')'+\alpha(\alpha+1)y$ is obtained from this by the substitution $s=\cos \theta$ and 
$y(\cos\theta)=\Theta(\theta)$. In particular, we obtain that seeking for the minimal $y(s)\geq v(s)=h_c(s)$ such that 
$$ 
((1-s^2)y')'+\alpha(\alpha+1)y \le 0
$$ 
is equivalent to seeking for the minimal superharmonic $\psi\geq V$. It is well-known that  $\psi$ 
is harmonic wherever $\psi>V$. We know that the only harmonic function satisfying these homogeneity 
and symmetry conditions corresponds to a solution of the Legendre equation. 

We recall that the Bellman function $\B$ from the beginning of the paper generates a function $g$ 
on $[-1,1]$ satisfying \eqref{minus1}, \eqref{plus1}. We use only \eqref{minus1}, which shows that this particular $g$ is a supersolution of the Legendre equation: $(1-x^2) g'' -2x g' + p g\le 0$. Thus, the function 
$g$ generates a superharmonic majorant of $V$ given by the formula
$$
\Psi(r,\theta,\phi)=r^\alpha g(\cos(\theta))\,.
$$

Since $\psi\le\Psi$, there exists another solution  of \eqref{minus1} which actually satisfies the Legendre equation 
everywhere, where it is strictly bigger than $h_c$. Denote this solution of the Legendre equation by $L$. 
When $L$ and $h_c$ meet at a certain point $s_0$, it should be a point where they are tangent to each other. 
Otherwise, if $L>h_c$ on $(s_0, s_0+\epsilon)$, and $L(s_0)=h_c(s_0)$, then $L'(s_0)>h_c'(s_0)$. Define a 
new function: $f=L$ to the right of $s_0$ and $f=h_c$ to the left of $s_0$. It cannot be a solution of 
\eqref{minus1}. In fact, the inequality $L'(s_0)>h_c'(s_0)$ implies that $f''(s_0)$ is a positive delta function, 
and so satisfies at $s_0$ the inequality exactly opposite to \eqref{minus1}.

Thus, $L$  is inevitably tangent to $h_c$ at a meeting point. Furthermore, such pairs $(L, h_c)$, where 
$L$ is a solution of the Legendre equation and $h_c$ is an obstacle function, were already considered when 
we were looking for the smallest constant $c$. Thus, the minimal possible $c$ is $\frac{1+z_p}{1-z_p}$.

\section{Another way to prove sharpness}
\label{sh2}
 
The use of the Laplacian in $\R^3$ is of course very specific for the Legendre equation. The Legendre equation 
and spherical harmonics are close relatives. We want to show an approach that uses much less specifics of 
the ODE (still, it uses some of it).
 
Let
$$
P(x) y'' +Q(x) y' +R(x) y=0
$$
be an equation such that the functions $P, Q, R$ are real analytic, and let $x_0$ be 
a regular singular point (we choose $x_0=1$ as in the Legendre case, but this is of no importance). 
One has two solutions: a bounded one, $f_1$, and an unbounded one, $f_2$. In particular, this happens 
when the indicial equation has a double root $r_1=0$ as in the Legendre case. 
The reader may take a look at Section \ref{general}. We assume here that $f_1(1)>0$ and 
(unlike in Section \ref{general}) $f_2(1)=-\infty$. 
 
Let $b$ be the closest to $1$ zero of $f_1$, $b<1$. We deal only with the interval $[b,1]$, and assume that 
$P,Q,R$ keep sign on $[b,1)$.
 
\begin{lemma}
\label{supersol}
Let $g$ be a supersolution on $[b,1]$, that is $P(x) g'' +Q(x) g' +R(x) g\le 0$, and let $g(b)> 0$.
Then $g(1)=-\infty$.
\end{lemma}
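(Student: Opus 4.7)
The plan is to run a Wronskian-type comparison between $g$ and the bounded solution $f_1$ on $[b,1)$, and then to use reduction of order to convert the blow-up of $f_2$ at $1$ into blow-up of $g$. I would introduce
$$
W(x) := f_1(x)g'(x) - f_1'(x)g(x).
$$
Multiplying the supersolution inequality $Pg''+Qg'+Rg\le 0$ by $f_1$ and the equation $Pf_1''+Qf_1'+Rf_1=0$ by $g$, then subtracting, one obtains $PW'+QW\le 0$ on $[b,1)$. Since $P$ keeps sign on $[b,1)$ (positive, as in the Legendre setting), dividing by $P$ and introducing the positive integrating factor $\mu(x)=\exp\bigl(\int_b^x Q(t)/P(t)\,dt\bigr)$ yields $(\mu W)'\le 0$, and hence $\mu(x)W(x)\le\mu(b)W(b)=W(b)$ for every $x\in[b,1)$.

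Next I would evaluate $W$ at the endpoint $b$. Since $f_1(b)=0$, we have $W(b)=-f_1'(b)g(b)$. The hypothesis gives $g(b)>0$. Because $b$ is the closest zero of $f_1$ to $1$ and $f_1(1)>0$, $f_1$ is strictly positive on $(b,1]$, so $f_1'(b)\ge 0$. Uniqueness for the ODE at the ordinary point $b$ (where $P(b)\ne 0$ by the sign hypothesis) rules out $f_1'(b)=0$: otherwise $f_1\equiv 0$, contradicting $f_1(1)>0$. Hence $f_1'(b)>0$ and $W(b)<0$.

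Finally, since $(g/f_1)'=W/f_1^2\le W(b)/(\mu f_1^2)$, integration from $b<x_0<x<1$ gives
$$
\frac{g(x)}{f_1(x)} - \frac{g(x_0)}{f_1(x_0)} \le W(b)\int_{x_0}^{x}\frac{dt}{\mu(t)\,f_1(t)^2}.
$$
By reduction of order, the second linearly independent solution of the ODE has the form $f_1(x)\int^x dt/(\mu f_1^2)$ modulo a multiple of $f_1$; so $f_2(1)=-\infty$ together with $f_1(1)$ being finite and positive forces the positive integrand $1/(\mu f_1^2)$ to satisfy $\int_{x_0}^{1}dt/(\mu f_1^2)=+\infty$. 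Combined with $W(b)<0$, the right-hand side above tends to $-\infty$, and since $f_1(1)>0$ this forces $g(x)\to -\infty$ as $x\to 1^-$, which is the desired conclusion.

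The only slightly delicate points are the sign bookkeeping (we need $P>0$ on $[b,1)$ to get the correct direction of $(\mu W)'\le 0$; this is guaranteed by the ``$P,Q,R$ keep sign'' hypothesis together with the standing Legendre-type setting) and the strict inequality $f_1'(b)>0$ from analytic uniqueness. Neither is a real obstacle; the conceptual point is that one and the same integral $\int^1 dt/(\mu f_1^2)$ governs both the divergence of $f_2$ at $1$ and the divergence of $g/f_1$.
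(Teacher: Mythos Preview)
Your proof is correct and follows essentially the same approach as the paper. The paper introduces, alongside your Wronskian $\tilde W=g'f_1-f_1'g$, the Wronskian $W=f_2'f_1-f_1'f_2$ of the two solutions, shows $(\tilde W/W)'\ge 0$ (equivalently your $(\mu\tilde W)'\le 0$, since $W=\text{const}/\mu$ by Abel's formula), and then integrates $(g/f_1)'\le\kappa(f_2/f_1)'$ to conclude; your reduction-of-order step identifying $\int^1 dt/(\mu f_1^2)=+\infty$ is exactly the paper's use of the divergence of $f_2/f_1$, so the two arguments are repackagings of one another.
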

 
Let us first explain why this lemma implies the sharpness of the constant $c_p$. Recall that the Bellman function $\B$ from the beginning of the paper generates a function $g$ on $[-1,1]$ satisfying 
\eqref{minus1}, \eqref{plus1}. We use only \eqref{minus1}, which shows that this particular $g$ is a supersolution of the  Legendre equation: $(1-x^2) g'' -2x g' + p g\le 0$. We constructed a special solution 
$L_{p, a_p}$ with the ``best" zero $z_p$, positive on $(z_p,1]$. The obstacle function $h_{c_p}$ lies below $L_{p, a_p}$ and touches it at $z_p$. If our $g$ were such that $g\ge h_c$ on $[-1,1]$, and $c<c_p$, 
then $g\ge h_c(z_p)>0$. Lemma \ref{supersol} shows that the inequality $g\ge h_c$ is impossible on 
$[z_p,1]$ because $h_c(1) =1$, $g(1)=-\infty$. Therefore, we must have $c\ge c_p$. And this is exactly the sharpness of our constant. 
 
It remains to prove our lemma.
 
\begin{proof}[Proof of Lemma~\ref{supersol}]
Consider two Wronskians:
\begin{align*}
W(x) &= f_2'(x) f_1(x)-f_1'(x) f_2(x),\\
\tilde{W}(x) &= g'(x) f_1(x)-f_1'(x) g(x)\,.
\end{align*}
 
Using that $g$ is a supersolution, and $f_1$ is a solution we can write on $(b,1)$:
$$
\tilde{W}' \le -\frac{Q}{P} \tilde{W}\,.
$$
On the other hand, we have everywhere 
$$
W' =-\frac{Q}{P} W\,.
$$
Combining these two relations we get
\begin{equation}
\label{W}
\tilde{W}' \le \frac{W'}{W} \tilde{W}\,.
\end{equation}
Furthermore, it is easy to see that $\lim_{x\to 1}W(x)=-\infty$ (because of the behavior of $f_2$). 
The Wronskian $W$ preserves the sign, so $W(x)<0$ on $[b,1)$. Therefore, \eqref{W} (after division by $W<0$) 
can be rewritten as
\begin{equation}
\label{W1}
(\tilde{W}/W)'\ge 0\,\,\text{on}\,\, [b,1)\,.
\end{equation}
 
Note that $\tilde{W}(b) = -g(b) f_1'(b) <0$, $W(b) =- f_2(b) f_1'(b)<0$. Set $\kappa=\frac{g(b)}{f_2(b)}>0$.
Inequality \eqref{W1} shows that 
$$
\tilde{W}(x)/W(x) \ge \kappa
$$
on $[b,1]$. Using the negativity of $W$, we get
\begin{equation}
\label{W2}
\tilde{W}(x)\le \kappa \,W(x)\,,\qquad x\in [b,1)\,.
\end{equation}
 
Note that $(g/f_1)'=\tilde{W}/f_1^2$, $(f_2/f_1)'=W/f_1^2$. Then \eqref{W2} becomes
\begin{equation*}
\Bigl(\frac{g}{f_1}\Bigr)'\le \kappa \,\Bigl(\frac{f_2}{f_1}\Bigr)'\,\,\text{on}\,\, [b,1)\,.
\end{equation*}
Hence,
\begin{equation*}
\frac{g}{f_1}\le \kappa \,\frac{f_2}{f_1} + \text{const}\,\,\text{on}\,\, [b,1)\,.
\end{equation*}
Now we see that $\lim_{x\to 1}g(x)=-\infty$. The lemma is proved.
\end{proof}

\section{Asymptotics of $z_p$}
\label{Asymptotics}

The Mehler-Heine formula (1868),
$$
\lim_{n\to\infty,\,n\in\mathbb N} L_n\bigl(\cos\frac xn\bigr)=J_0(x)
$$
establishes a relation between the Legendre polynomials $L_n$ and 
the Bessel function of zero order $J_0$. As a consequence 
(\cite[Theorem 8.1.2]{SZE}),
\begin{equation}
\lim_{n\to\infty,\,n\in\mathbb N}n(n+1)(1-z_{n(n+1)})=\frac{j^2_0}{2},
\label{rmk}
\end{equation}
where $j_0$ is the first positive zero of the Bessel function of zero order $J_0$.

Let $\beta>\alpha>1$, $D_\alpha f_\alpha=0$, $D_\beta f_\beta=0$, 
$f_\alpha(1)=f_\beta(1)=1$, and let $f_\alpha(x)>0$, 
$x_\alpha<x\le 1$, $f_\alpha(x_\alpha)=0$. If $f_\beta(x)> 0$ on  
$[x_\alpha,1]$, then $D_\alpha f_\beta\le 0$ on $[x_\alpha,1]$, and, hence, 
$f_\beta$ is a positive supersolution for $D_\alpha$ on $[x_\alpha,1]$ which is impossible by Lemma~\ref{supersol}. Thus, $z_p$ increases for 
$p\in(2,+\infty)$, and \eqref{rmk} gives us 
$$
\lim_{p\to\infty}p(1-z_{p})=\frac{j^2_0}{2}.
$$
 
\section{On Burkholder functions}
\label{Bfunction}
 
We did not find the formula for the function $\B$ from Section \ref{Bellf}. However, we have found the formula 
for the function $\Phi=\Phi_{c_p}$ from Section \ref{reduction}. Indeed, the function $\phi$ of two variables 
associated to $\Phi_{c_p}$ gives rise to a function $g_p$ of one variable on $[-1,1]$ such that 
the corresponding function $r^{\alpha} g_p(\cos \theta)$ (recall that $p=\alpha (\alpha +1)$) expressed 
in the spherical coordinates in $\R^3$ is the least superharmonic majorant of 
$$
V(r,\theta,\phi):=r^{\alpha}\biggl(\Bigl(\frac{1+\cos\theta}{2}\Bigr)^p - 
c_p^p\Bigl(\frac{1-\cos\theta}{2}\Bigr)^p\biggr)\,.
$$
How to recognize the least superharmonic majorant of a given function $V$? This is the function which is 
harmonic everywhere where it is strictly bigger than $V$. We constructed (using our $g_p$) exactly such a 
function. So we can completely restore $\Phi$ from $g_p$.

\section{One-sided orthogonality. Answers and questions.}
\label{ones}
 
Suppose that $Z$ is orthogonal, and $W$ is not. We always assume that $W$ is subordinate to $Z$. 
For $1<p\le 2$ we have cited an estimate
$$
\|W\|_p \le \sqrt{\frac{2}{p(p-1)}}\|Z\|_p\,.
$$
Is this sharp? We do not know. 
 
What if $p\ge 2$? Here is a result which we prove in \cite{BJV2}:
$$
\|W\|_p \le \sqrt{2}\frac{1-s^*_p}{s^*_p}\|Z\|_p\,,\qquad p\ge 2\,,
$$
where $s^*_p$ is the closest to $0$ zero of a bounded near $0$ solution of the Laguerre equation
\begin{equation}
s^2 \LLL'' +(1-s) \LLL' + p \LLL=0\,.
\label{lag5}
\end{equation}
This estimate is sharp, see \cite{BJV2}.

Suppose that $W$ is orthogonal, and $Z$ is not. For $p\ge 2$ we have cited an estimate
$$
\|W\|_p \le \sqrt{\frac{p(p-1)}{2}}\|Z\|_p\,.
$$
Is this sharp? We do not know.

What if $1<p\le 2$? Here is a result which we prove in \cite{BJV2}:
$$
\|W\|_p \le \frac1{\sqrt{2}}\frac{s_p}{1-s_p}\|Z\|_p\,,\qquad 1<p\le 2\,,
$$
where $s_p$ is the closest to $1$ zero of a bounded near $0$ solution of Laguerre equation
\eqref{lag5}. This estimate is sharp, see \cite{BJV2}.
 
 
  

\end{document}